\documentclass[11pt]{amsart}

\usepackage{amsmath,amssymb,mathtools}
\usepackage{booktabs}
\usepackage{array}
\usepackage{microtype}
\usepackage{enumitem}
\usepackage{aliascnt}
\usepackage{hyperref}
\usepackage[nameinlink,noabbrev]{cleveref}

\hypersetup{colorlinks=true,linkcolor=blue,citecolor=blue,urlcolor=blue}
\newtheorem{theorem}{Theorem}[section]
\newaliascnt{proposition}{theorem}
\newtheorem{proposition}[proposition]{Proposition}
\aliascntresetthe{proposition}
\newaliascnt{corollary}{theorem}
\newtheorem{corollary}[corollary]{Corollary}
\aliascntresetthe{corollary}
\newaliascnt{lemma}{theorem}
\newtheorem{lemma}[lemma]{Lemma}
\aliascntresetthe{lemma}
\theoremstyle{remark}
\newaliascnt{remark}{theorem}
\newtheorem{remark}[remark]{Remark}
\aliascntresetthe{remark}

\newcommand{\R}{\mathbb{R}}
\newcommand{\C}{\mathbb{C}}
\newcommand{\B}{\mathbb{B}}
\newcommand{\PiC}{\Pi^{\mathbb C}}
\newcommand{\dd}{\,\mathrm d}
\newcommand{\norm}[1]{\lVert #1\rVert}
\newcommand{\abs}[1]{\lvert #1\rvert}
\newcommand{\even}{\mathrm{even}}
\newcommand{\disk}{\mathrm{disk}}
\newcommand{\cone}{\mathrm{cone}}
\newcommand{\pr}{\mathrm{prod}}
\DeclareMathOperator{\ran}{ran}

\title[Positive cubature compression on a conic surface]{Positive Cubature Compression and Optimal Hyperinterpolation Stability on a Conic Surface}

\author{Congpei An}
\email{andbachcp@gmail.com}
\author{Yan Ge}
\email{yge@ncut.edu.cn}

\subjclass[2020]{Primary 65D32; Secondary 41A10, 65D05, 33C50}
\keywords{conic surface, positive cubature, hyperinterpolation, Lebesgue constant, M\"oller bound, Marcinkiewicz--Zygmund identity, sampling stability}

\begin{document}

\begin{abstract}
We study the computational cost and stability of positive cubature and hyperinterpolation on a truncated conic surface with a Jacobi-type weight.  A classical quadratic disk-to-cone map is used in a basis-free form as a measure-preserving $\mathbb Z_2$ quotient.  It identifies the full degree-$m$ conic trace space with the even disk polynomials of degree at most $2m$ and converts positive degree-$2n$ cone cubature into centrally symmetric positive degree-$(4n+1)$ disk cubature, and conversely.  This yields an exact transfer of M\"oller's lower bound and of the node excess above it, so that near-minimal disk formulas produce compressed non-product cone rules.

The same quotient transfers reproducing kernels and hyperinterpolation operators.  Every positive degree-$2n$ cone rule gives an exact $L^2$ sampling isometry on $\Pi_n(V)$; in particular, the weighted sampling matrix has condition number one, independently of the number and geometry of the nodes.  For the unweighted radial case $\gamma=0$, if $\Lambda_n^V$ denotes the $C(V)\to C(V)$ Lebesgue constant of degree-$n$ hyperinterpolation, then every positive degree-$2n$ cone cubature rule satisfies the rule-independent sharp law
\[
       c n \le \Lambda_n^V \le C n.
\]
{More generally, the upper bound $\Lambda_{n,\gamma}^V \le C_\gamma n^{\gamma+1}$ holds when $\gamma$ is a nonnegative half-integer.}  
Thus node compression preserves exact $L^2$ conditioning while the associated $L^\infty$ stability has the optimal universal order.  Low-degree near-minimal rules and higher-degree optimized disk schemes illustrate the reduction in sampling cost.
\end{abstract}

\maketitle

\section{Introduction}

Positive cubature on algebraic surfaces plays a dual computational role: it evaluates integrals and it provides discrete sampling for polynomial approximation.  These two tasks are tightly coupled in hyperinterpolation, where the cubature rule determines both the sampling cost and the stability of the resulting polynomial operator.  This paper addresses the following question on a truncated cone:

\begin{quote}
\textit{How far can a positive degree-$2n$ cubature rule be compressed toward the dimension limit while retaining exact $L^2$ sampling and stable degree-$n$ hyperinterpolation?}
\end{quote}

Let
\begin{equation}\label{eq:cone}
V:=\{(x,y,t)\in\R^3:x^2+y^2=t^2,\ 0\le t\le1\},
\end{equation}
parametrized by $(x,y,t)=(t\cos\phi,t\sin\phi,t)$, and equip $V$ with the probability measure
\begin{equation}\label{eq:nu}
\dd\nu_\gamma(t,\phi)=\frac{\gamma+1}{2\pi}(1-t)^\gamma\,\dd t\,\dd\phi,
\qquad \gamma>-1.
\end{equation}
Since $\dd\sigma=\sqrt2\,t\,\dd t\,\dd\phi$, this is the normalized form of
$t^{-1}(1-t)^\gamma\dd\sigma$.

A standard Gauss--Jacobi/trapezoidal rule exact on $\Pi_{2n}(V)$ uses
\begin{equation}\label{eq:product-count-intro}
N_n^{\pr}=(n+1)(2n+1)
\end{equation}
nodes, whereas
\begin{equation}\label{eq:dim-intro}
D_n:=\dim\Pi_n(V)=(n+1)^2,
\end{equation}
and positivity together with degree-$2n$ exactness forces $N\ge D_n$.  Thus the product rule has asymptotic oversampling ratio two.  At the same time, degree-$2n$ exactness implies the exact Marcinkiewicz--Zygmund identity
\[
   \sum_j\lambda_j\abs{p(\xi_j)}^2=\norm{p}_{L^2(\nu_\gamma)}^2,
   \qquad p\in\Pi_n(V),
\]
so the weighted sampling matrix is an isometry.  This already suggests that reducing the node count need not damage $L^2$ conditioning.

The approximation theory of conic and other quadratic surfaces is well developed.  Orthogonal polynomials, Fourier expansions, cubature, and fast approximation were studied by Olver and Xu~\cite{OlverXu2020} and Xu~\cite{XuCone2020,XuFrame2021}, with further approximation results on conic domains in~\cite{GeXu2026}.  Hyperinterpolation was introduced by Sloan~\cite{Sloan1995}; for a recent overview of its quadrature foundations, Marcinkiewicz--Zygmund formulations, variants, and applications, see An, Ran and Wu~\cite{AnRanWu2025}.  On the unit disk, Hansen, Atkinson and Chien~\cite{HansenAtkinsonChien2009} obtained an $O(n\log n)$ bound for a concrete product construction, and Wade~\cite{Wade2013} established linear-order bounds for disk hyperinterpolation with positive cubature weights.  Related spherical and ball results include~\cite{Reimer2000,SloanWomersley2000,WangEtAl2015}; Marcinkiewicz--Zygmund sampling is treated, for example, in~{\cite{AnWu2024,FilbirMhaskar2011}}.  Near-minimal cubature on the disk is governed by M\"oller's lower bound~\cite{Moller1976}; see also~\cite{BenouahmaneCuytYaman2019,Cools1997,CoolsMysovskikhSchmid2001,CoolsVerlinden1992} and, for numerical optimization,~\cite{TakakiForbesRolland2022}.

Positive cubature also has a general compression theory rooted in Tchakaloff's theorem~\cite{Tchakaloff1957}; see Bayer and Teichmann~\cite{BayerTeichmann2006} for a modern proof.  Carath\'eodory--Tchakaloff subsampling gives constructive compression of discrete measures on polynomial spaces, including compact sets and manifolds~\cite{PiazzonSommarivaVianello2017}, while provably positive and exact cubature algorithms have been developed in a broader computational setting~\cite{Glaubitz2023}.  Such general principles control the number of nodes by the dimension of the full exactness space, which here is
\[
   \dim\Pi_{2n}(V)=(2n+1)^2.
\]
The problem studied below is more restrictive and more quantitative: we seek positive degree-$2n$ rules approaching the much smaller square-exactness/M\"oller scale $D_n=\dim\Pi_n(V)=(n+1)^2$, and at the same time ask whether this stronger compression preserves the conditioning and uniform stability of the induced approximation operator.

Our mechanism is the quadratic map
\begin{equation}\label{eq:Q}
Q(u,v):=(u^2-v^2,2uv,u^2+v^2),\qquad (u,v)\in\B^2.
\end{equation}
Writing $z=u+iv$,
\[
Q(z)=(\operatorname{Re}z^2,\operatorname{Im}z^2,\abs z^2),
\]
so $Q(z)=Q(-z)$; in polar coordinates $t=r^2$ and $\phi=2\theta$.  The quadratic disk-to-cone transformation itself is classical. {  In the present two-dimensional regime with
  $\beta=-1$,} Xu's conic orthogonal polynomials admit explicit quadratic transformations to complex disk polynomials; {  these are established in~\cite{XuCone2020}. } The framework in~\cite{XuSymmetric2026} further emphasizes quadratic maps between symmetric domains.  We therefore claim no novelty for the map $Q$ or for its action on individual basis functions.

The point of this paper is instead to use $Q$ at the level of \emph{full trace spaces, positive cubature, sampling matrices, reproducing kernels, and hyperinterpolation operators}.  For the disk probability measure
\begin{equation}\label{eq:mu}
\dd\mu_\gamma(u,v)=\frac{\gamma+1}{\pi}(1-u^2-v^2)^\gamma\,\dd u\,\dd v,
\end{equation}
we prove
\begin{equation}\label{eq:quotient-intro}
Q^*:L^2(V,\nu_\gamma)\xrightarrow{\ \cong\ }L^2_{\even}(\B^2,\mu_\gamma),
\qquad Q^*\Pi_m(V)=\Pi^{\even}_{2m}(\B^2).
\end{equation}
This basis-free quotient turns the cone problem into a symmetric disk problem without losing positivity or exactness.

The main conclusions can be summarized as follows.

\begin{theorem}[Compression and stability principle]\label{thm:main-summary}
Let $\gamma>-1$ and let a positive cubature rule on $(V,\nu_\gamma)$ be exact on $\Pi_{2n}(V)$.
\begin{enumerate}[label=\textnormal{(\roman*)}]
\item It lifts bijectively to a centrally symmetric positive disk rule of degree $4n+1$.  Under this lifting, M\"oller's disk lower bound is exactly the cone dimension bound $N\ge D_n$, and the excesses above the two bounds are related by an explicit  {  even–odd node-count identity}.
\item The weighted sampling matrix on $\Pi_n(V)$ has orthonormal columns; hence its $2$-norm condition number is exactly one.
\item {  If in addition   $\gamma\in \tfrac12\mathbb Z_{\ge0}$, then the  associated degree-$n$ hyperinterpolation operator satisfies
\[
\Lambda_{n,\gamma}^V \le C_\gamma n^{\gamma+1}.
\]
If $\gamma = 0$, this estimate is sharp on both sides:}

\[
      c n\le \Lambda_n^V\le C n,
\]
with constants independent of both $n$ and the particular positive cubature rule.
\end{enumerate}
\end{theorem}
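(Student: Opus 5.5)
The plan is to route everything through the quotient \eqref{eq:quotient-intro}, proved first as a lemma.

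\textbf{The quotient lemma.} The change of variables $t=r^2$, $\phi=2\theta$ gives $\dd t\,\dd\phi=4r\,\dd r\,\dd\theta$. Hence
\[
\frac{\gamma+1}{2\pi}(1-t)^\gamma\,\dd t\,\dd\phi=\frac{\gamma+1}{\pi}(1-r^2)^\gamma\, 2r\,\dd r\,\dd\theta ,
\]
and since $(r,\theta)\mapsto(t,\phi)$ is two-to-one, $Q$ pushes $\mu_\gamma$ forward to $\nu_\gamma$. Consequently $Q^*$ is an isometry onto the even functions. For the polynomial statement:
\begin{itemize}
\item $Q^*$ sends the coordinates $x,y,t$ to even quadratics, so $Q^*\Pi_m(V)\subseteq\Pi^{\even}_{2m}(\B^2)$.
\item Conversely, every even monomial of degree $\le 2m$ is a product of the quadratics $u^2,v^2,uv$. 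These equal $(t+x)/2$, $(t-x)/2$ and $y/2$, so equality holds.
\item A dimension count gives $\dim\Pi^{\even}_{2m}=(m+1)^2=D_m$, consistent with \eqref{eq:dim-intro}.
\end{itemize}

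\textbf{Part (i).} Given a cone rule $\{(\xi_j,\lambda_j)\}$, choose for each $\xi_j$ its two preimages $\pm z_j$, each with weight $\lambda_j/2$. The apex $t=0$ has the single preimage $z=0$, which keeps the full weight $\lambda_j$. This gives a centrally symmetric rule. It integrates odd polynomials to zero trivially, and integrates even polynomials of degree $\le 4n$ exactly by the lemma, so it is exact on $\Pi_{4n+1}$. The converse is symmetric: a centrally symmetric positive rule of degree $4n+1$ pushes down under $Q$.

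For the node count, the lift has $N'=2N-\epsilon$ nodes, where $\epsilon\in\{0,1\}$ records whether the apex is a node. M\"oller's bound for degree $4n+1=2k-1$, with $k=2n+1$, is
\[
\dim\Pi_{2n}+\lfloor k/2\rfloor ,
\]
and depending on the parity convention one should check that this equals $2(n+1)^2-1$ or $2(n+1)^2$. Matching this against $2N-\epsilon\ge\cdot$ recovers $N\ge D_n$, and the excesses satisfy $N'-M=2(N-D_n)$ up to the apex correction, which is the even–odd identity. Getting the exact M\"oller form and the parity bookkeeping right is the only delicate point here.

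\textbf{Part (ii).} Fix an orthonormal basis $\{p_k\}$ of $\Pi_n(V)$. The sampling matrix is $A_{jk}=\sqrt{\lambda_j}\,p_k(\xi_j)$. Then
\[
(A^*A)_{kl}=\sum_j\lambda_j\,\overline{p_k(\xi_j)}\,p_l(\xi_j)=\langle p_l,p_k\rangle=\delta_{kl},
\]
because $\overline{p_k}\,p_l\in\Pi_{2n}(V)$ is integrated exactly. So $A$ has orthonormal columns and its condition number is one.

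\textbf{Part (iii).} Write $L_n f=\sum_j\lambda_j f(\xi_j)K_n(\cdot,\xi_j)$, which gives
\[
\Lambda_n^V=\max_x\sum_j\lambda_j\abs{K_n(x,\xi_j)} .
\]
Via the lemma, $K_n^V(Q w,Q z)$ is the even part of the disk kernel of degree $2n$, namely
\[
\tfrac12\bigl(K^{\B}_{2n}(w,z)+K^{\B}_{2n}(w,-z)\bigr).
\]
For the upper bound, apply Cauchy–Schwarz together with exactness of the rule on $\abs{K_n}^2\in\Pi_{2n}$. This only gives $\sqrt{D_n}\sim n$ in $L^2$ terms:
\[
\sum_j\lambda_j\abs{K_n(x,\xi_j)}\le\Bigl(\sum_j\lambda_j\abs{K_n(x,\xi_j)}^2\Bigr)^{1/2}=K_n(x,x)^{1/2}.
\]
For $\gamma=0$ one has $K_n(x,x)\sim n^2$ near the boundary, so this gives $Cn$ directly. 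For general $\gamma$, $\sup_x K_n(x,x)\sim n^{2\gamma+2}$, attained near $t=1$, so the bound is $C_\gamma n^{\gamma+1}$.

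The half-integer hypothesis is presumably needed when a sharper pointwise argument is required. In that case I would use the closed form of the disk kernel available for half-integer $\gamma$, via the sphere/ball lifting to dimension $2\gamma+2$. The Cauchy–Schwarz route seems to suffice, however, and I would check whether the constants are uniform.

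The lower bound $cn$ for $\gamma=0$ is the main obstacle, because it must hold for every positive rule. I would use
\[
\Lambda_n\ge\norm{L_n}_{C\to C}\ge\sup_{f}\abs{L_nf(x)}\ \text{ for }\norm f_\infty\le1 ,
\]
and compare with the $L^1$ norm of the kernel. Since the rule is exact on $\Pi_{2n}$, one can apply positive-weight Marcinkiewicz–Zygmund $L^1$ comparisons (Wade-type arguments on the disk, transferred through $Q$) to get $\sum_j\lambda_j\abs{K_n(x,\xi_j)}\gtrsim\norm{K_n(x,\cdot)}_{L^1}$ for a suitable boundary point $x$. Alternatively, I would transfer Wade's rule-independent disk lower bound for the even subspace.
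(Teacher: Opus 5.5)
Parts (i) and (ii) follow the paper and are fine. In (i), just finish the arithmetic: $\dim\Pi_{2n}(\R^2)+n=2(n+1)^2-1$ is odd, so $2N-\epsilon\ge 2D_n-1$ gives $N\ge D_n$ in both cases, and $M_{\disk}-(2D_n-1)=2(N-D_n)+(1-\epsilon)$. The genuine gaps are in (iii), in both directions.

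\textbf{Upper bound.} The inequality $\lambda_n^V(\xi)\le K_n^V(\xi,\xi)^{1/2}$ is correct, but your growth estimate for the diagonal is not. You have replaced $\sup_\xi K_n^V(\xi,\xi)$ by its mean $\int_V K_n^V(\xi,\xi)\,\dd\nu_\gamma=D_n$, and the diagonal is much larger at the rim. For $\gamma=0$, the boundary-kernel formula at $\xi_*=(1,0,1)$ gives $K_n^V(\xi_*,\xi_*)=\sum_{j=0}^n(2j+1)U_{2j}(1)=\sum_{j=0}^n(2j+1)^2\sim\tfrac43 n^3$. For general $\gamma$ the rim value has order $n^{2\gamma+3}$. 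So Cauchy--Schwarz gives only $\Lambda_{n,\gamma}^V\lesssim n^{\gamma+3/2}$, i.e.\ $n^{3/2}$ at $\gamma=0$. This is exactly the square-root loss the paper warns about: the off-diagonal oscillation of the kernel has to be used.

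The paper does this at operator level. It lifts the rule and uses $(L_n^Vf)\circ Q=H_{2n}^B(f\circ Q)$ together with $C(V)\cong C_{\even}(\B^2)$ to get $\norm{L_n^V}\le\norm{H_{2n}^B}$. It then invokes Wade's estimate $\norm{H_m^B}\le C_{\mu_B}m^{\mu_B+1/2}$ with $m=2n$ and $\mu_B=\gamma+\tfrac12$. That estimate is uniform over all positive rules exact through degree $2m$, because Wade tensors the ball rule with a spherical rule and applies Reimer's bound. This construction needs $\mu_B\in\tfrac12\mathbb{N}$, and that is where $\gamma\in\tfrac12\mathbb{Z}_{\ge0}$ enters, not through a closed-form kernel.

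\textbf{Lower bound.} Neither of your suggested routes works.
\begin{itemize}
\item Exactness on $\Pi_{2n}(V)$ yields only the $L^2$ identity on $\Pi_n(V)$. A lower $L^1$ Marcinkiewicz--Zygmund comparison at this critical sampling level is false in general: for trigonometric polynomials of degree $n$ at $2n+1$ equispaced nodes, the Dirichlet kernel has discrete mean $1$ but $L^1$ norm $\asymp\log n$.
\item If $N=D_n$, then $\mathrm{S}\mathrm{S}^*=I$ forces $\lambda_jK_n^V(\xi_i,\xi_j)=\delta_{ij}$, so $\lambda_n^V(\xi_i)=1$ at every node. Any pointwise comparison must therefore choose $x$ in a rule-dependent way, and you give no mechanism for doing so.
\item Transferring Wade's disk lower bound also fails. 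The relation $\norm{L_n^V}=\norm{H_{2n}^B|_{C_{\even}(\B^2)}}\le\norm{H_{2n}^B}$ goes the wrong way, and the lower bound for the even restriction is exactly what has to be proved.
\end{itemize}

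The paper's argument rests on three ideas you are missing.
\begin{itemize}
\item[(a)] Rotation invariance of $K_n^V$ about the $t$-axis lets you average $\lambda_n^V$ over the rim points $\xi(1,\psi)$. This gives $\Lambda_n^V\ge\sum_j\lambda_jG_n(t_j)$, with $G_n(t)=\frac1{2\pi}\int_0^{2\pi}\abs{K_n^V(\xi_*,\xi(t,\phi))}\,\dd\phi$, and removes all dependence on the angular positions of the nodes.
\item[(b)] The closed Chebyshev form of $K_n^V(\xi_*,\cdot)$ gives, for $t\in[1/4,3/4]$, the bound $\max_\phi\abs{K_n^V}\lesssim n$ and, by a nonstationary-phase estimate, an angular mean of $\abs{K_n^V}^2$ that is $\gtrsim n^2$. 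Hence $G_n(t)$, which is at least the ratio of these two quantities, is $\gtrsim n$.
\item[(c)] Positivity and exactness on $t^4(1-t)^4\in\Pi_8(V)$ force radial weight greater than $1/8$ into $[1/4,3/4]$.
\end{itemize}
The finitely many small $n$ are then covered by $\lambda_n^V\ge1$.
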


The upper estimate in part~(iii) follows by transferring Wade's disk estimate to the even subspace.  The lower estimate requires a separate argument: a rim average removes all assumptions on the angular arrangement of the nodes, while positivity and exactness force a fixed amount of cubature mass into an interior radial annulus.  This produces a rule-independent $L^1$ lower bound for the sampled reproducing kernel.  The key point is that the lower estimate cannot be obtained merely from a lower bound for the full disk operator, because the conic operator corresponds to the restriction to even data.

The resulting computational picture is therefore
\[
\boxed{
\text{fewer samples}
\quad+\quad
\text{exact }L^2\text{ conditioning}
\quad+\quad
\text{optimal universal }L^\infty\text{ stability}.
}
\]
For $\gamma=0$, known near-minimal disk formulas give rigorous cone support bounds $4,10,18,29$ for $n=1,2,3,4$, compared with $6,15,28,45$ product nodes.  Higher-degree optimized disk schemes provide further numerical examples.

The paper is organized to follow this computational chain.  \Cref{sec:trace} gives the product benchmark and the dimension lower bound.  \Cref{sec:quotient} establishes the measure and polynomial quotient.  \Cref{sec:cubature} proves cubature equivalence, transfers M\"oller's bound, records compressed rules, and shows that compression preserves exact $L^2$ conditioning.  \Cref{sec:operator} transfers kernels and hyperinterpolation.  \Cref{sec:Lebesgue} proves the sharp rule-independent Lebesgue law for $\gamma=0$.  \Cref{sec:prony} records the dimension-matched Prony scale for the all-degree construction problem.

\section{Conic trace spaces and the product benchmark}\label{sec:trace}

Throughout, a positive cubature rule means a rule with distinct nodes and strictly positive weights; zero-weight nodes are discarded.  Exactness for real polynomials is extended complex-linearly when complex notation is used.

Let $\Pi_m(\R^3)$ denote { the space of polynomials}  of total degree at most $m$, and define the trace space
\[
\Pi_m(V):=\{P|_V:P\in\Pi_m(\R^3)\}.
\]
The relation $x^2+y^2=t^2$ yields the Fourier--radial description
\begin{equation}\label{eq:fourier-radial}
\PiC_m(V)=\operatorname{span}\{t^\ell e^{ik\phi}:0\le\ell\le m,\ |k|\le\ell\}.
\end{equation}
The functions in \eqref{eq:fourier-radial} are linearly independent: distinct Fourier modes are independent in $\phi$, and for each fixed $k$ the remaining powers of $t$ are independent on $(0,1)$.  Hence
\begin{equation}\label{eq:dimension}
\dim\Pi_m(V)=\sum_{\ell=0}^m(2\ell+1)=(m+1)^2.
\end{equation}

The following product rule is the Gaussian-in-radius/equispaced-in-angle specialization of the standard cubature construction on quadratic surfaces; compare~\cite{OlverXu2020}.

\begin{proposition}[Product benchmark]\label{prop:product}
Let $s\ge0$,
\[
m_s=\left\lfloor\frac{s}{2}\right\rfloor+1,
\qquad L_s=s+1.
\]
Let $\{(t_k,A_k)\}_{k=1}^{m_s}$ be the positive Gauss--Jacobi rule for the probability measure $(\gamma+1)(1-t)^\gamma\dd t$ on $[0,1]$, and let $\phi_j=2\pi j/L_s$.  Then
\begin{equation}\label{eq:product-rule}
Q_s f=\sum_{k=1}^{m_s}\frac{A_k}{L_s}\sum_{j=0}^{L_s-1}
 f(t_k\cos\phi_j,t_k\sin\phi_j,t_k)
\end{equation}
is positive and exact on $\Pi_s(V)$.  Its algebraic degree is exactly $s$, and it uses
\begin{equation}\label{eq:product-count}
N_s=(s+1)\left(\left\lfloor\frac{s}{2}\right\rfloor+1\right)
\end{equation}
nodes.
\end{proposition}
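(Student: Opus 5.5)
We will prove exactness on the spanning set of \eqref{eq:fourier-radial} with $m=s$, then produce a witness of degree $s+1$ that is integrated incorrectly, and finally count nodes. Positivity is immediate: the Gauss--Jacobi weights satisfy $A_k>0$, the nodes $t_k$ lie in $(0,1)$, and the $m_sL_s$ points $(t_k\cos\phi_j,t_k\sin\phi_j,t_k)$ are distinct. This gives \eqref{eq:product-count}, since $m_sL_s=(s+1)(\lfloor s/2\rfloor+1)$.

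\emph{Exactness.} Fix a monomial $f=t^\ell e^{ik\phi}$ with $0\le\ell\le s$ and $|k|\le\ell$. Both the rule and $\nu_\gamma$ factor into a radial part and an angular part. For the angular part, the equispaced sum $\frac1{L_s}\sum_j e^{ik\phi_j}$ equals $1$ if $L_s\mid k$ and $0$ otherwise. Since $|k|\le s<L_s$, it equals $\delta_{k0}$, which is exactly $\frac1{2\pi}\int_0^{2\pi}e^{ik\phi}\dd\phi$. Hence only $k=0$ contributes. In that case we need $\sum_k A_k t_k^\ell=\int_0^1 t^\ell(\gamma+1)(1-t)^\gamma\dd t$ for $\ell\le s$. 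This holds because the $m_s$-point Gauss rule is exact up to degree $2m_s-1=2\lfloor s/2\rfloor+1\ge s$. Real and imaginary parts then give exactness on the real space $\Pi_s(V)$.

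\emph{Sharpness of the degree.} We need one $P\in\Pi_{s+1}(V)$ on which the rule fails, and the choice depends on the parity of $s$.
\begin{itemize}
\item If $s$ is odd, so that $s+1=2m_s$, take $P=\prod_k(t-t_k)^2$. This has degree $2m_s=s+1$ in $t$, so $P\in\Pi_{s+1}(V)$. It vanishes at every node but has positive integral.
\item If $s$ is even, take $P=\operatorname{Re}(x+iy)^{s+1}=t^{s+1}\cos((s+1)\phi)$, which lies in $\Pi_{s+1}(V)$. Its integral is $0$, but since $L_s=s+1$ divides $k=s+1$, the angular sum equals $1$. The rule therefore returns $\sum_kA_kt_k^{s+1}>0$, because all $t_k>0$.
\end{itemize}
The odd-$s$ witness can also be replaced by the trigonometric one. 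For any $s$, $\operatorname{Re}(x+iy)^{s+1}$ works because $L_s\mid s+1$. So a single witness suffices, and the parity split is only a cross-check.

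\emph{Main obstacle.} The only delicate step is this witness for the exact degree. One must confirm that the Gauss nodes avoid $t=0$ (true for Jacobi weights with $\gamma>-1$), so that $\sum_kA_kt_k^{s+1}>0$. Everything else is a routine tensor-product argument.
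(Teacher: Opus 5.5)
Your proof is correct and follows essentially the same route as the paper. You check exactness on the spanning functions $t^\ell e^{ik\phi}$: the equispaced angular sums vanish for $0<|k|<L_s$, and for $k=0$ you use Gauss--Jacobi exactness through degree $2m_s-1\ge s$. You then show the degree is exactly $s$ with the same witness $\operatorname{Re}(x+iy)^{s+1}$, whose discrete value $\sum_k A_k t_k^{s+1}$ is positive while its integral vanishes. Your alternative odd-$s$ witness $\prod_k(t-t_k)^2$ and your explicit check that the $m_sL_s$ nodes are distinct are valid additions that the paper leaves implicit.
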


\begin{proof}
It suffices to use the spanning functions in \eqref{eq:fourier-radial}.  For angular frequency  $k=0$, Gauss--Jacobi exactness applies because $\ell\le s\le2m_s-1$.  For $0<\abs{k}\le\ell\le s$, both the continuous angular integral and the discrete angular average vanish, since $\abs{k}<L_s$.  Thus $Q_s$ is exact on $\Pi_s(V)$.

To see that the degree cannot be larger, take
\[
q_{s+1}(x,y,t)=\operatorname{Re}(x+iy)^{s+1}=t^{s+1}\cos((s+1)\phi).
\]
Its exact integral is zero, whereas $\cos((s+1)\phi_j)=1$ at every angular node.  Hence
\[
Q_s q_{s+1}=\sum_k A_k t_k^{s+1}>0.
\]
\end{proof}

For $s=2n$, \Cref{prop:product} gives $(n+1)(2n+1)$ nodes.  Its exactness implies
\begin{equation}\label{eq:product-MZ}
\sum_i\omega_i\abs{p(\xi_i)}^2=\norm{p}_{L^2(\nu_\gamma)}^2,
\qquad p\in\PiC_n(V).
\end{equation}
Thus the weighted Vandermonde matrix has orthonormal columns.

The universal lower bound is equally elementary but computationally decisive.

\begin{proposition}\label{prop:dimension-lower}
Every positive cubature rule on $(V,\nu_\gamma)$ that is exact on $\Pi_{2n}(V)$ satisfies
\[
N\ge D_n=(n+1)^2.
\]
\end{proposition}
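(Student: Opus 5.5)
The plan is to use the standard argument that a positive rule exact on squares of a space cannot have fewer nodes than the dimension of that space. Suppose the rule has nodes $\xi_1,\dots,\xi_N\in V$ and strictly positive weights $\lambda_1,\dots,\lambda_N$, and that $N<D_n$.

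Consider the evaluation map
\[
E:\Pi_n(V)\to\R^N,\qquad E(p)=(p(\xi_1),\dots,p(\xi_N)).
\]
This map is linear. By \eqref{eq:dimension}, its domain has dimension $D_n=(n+1)^2>N$. Hence $E$ has a nontrivial kernel, and we may choose a real $p\in\Pi_n(V)$ with $p\neq0$ on $V$ and $p(\xi_j)=0$ for all $j$.

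The square $p^2$ lies in $\Pi_{2n}(V)$. Indeed, if $p=P|_V$ with $P\in\Pi_n(\R^3)$, then $p^2=(P^2)|_V$ and $P^2\in\Pi_{2n}(\R^3)$. Exactness of the rule on $\Pi_{2n}(V)$ then gives
\[
\norm{p}_{L^2(\nu_\gamma)}^2=\int_V p^2\dd\nu_\gamma=\sum_{j=1}^N\lambda_j p(\xi_j)^2=0.
\]

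It remains to show that this is impossible for $p\neq0$. The main point to check is that $\norm{\cdot}_{L^2(\nu_\gamma)}$ is a genuine norm on $\Pi_n(V)$. The measure $\nu_\gamma$ has the continuous density $\frac{\gamma+1}{2\pi}(1-t)^\gamma>0$ on $(0,1)\times[0,2\pi)$. The function $p$, written in the coordinates $(t,\phi)$, is a nonzero trigonometric-polynomial combination of the functions in \eqref{eq:fourier-radial}, which are linearly independent as shown there. A nonzero such function is real-analytic on the open set $(0,1)\times(0,2\pi)$, so it cannot vanish almost everywhere there. Therefore $\norm{p}_{L^2(\nu_\gamma)}>0$, a contradiction, and we conclude $N\ge D_n$.

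Equivalently, the argument shows that the weighted sampling matrix has orthonormal columns, as in \eqref{eq:product-MZ}, and a matrix with $D_n$ orthonormal columns must have at least $D_n$ rows.
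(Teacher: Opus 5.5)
Your proof is correct and is essentially the paper's argument: a nonzero $p\in\Pi_n(V)$ vanishing at all $N<D_n$ nodes would satisfy $\|p\|_{L^2(\nu_\gamma)}^2=\sum_j\lambda_j p(\xi_j)^2=0$ by exactness on $\Pi_{2n}(V)$, which is impossible. The only difference is cosmetic: you exclude $\|p\|=0$ via real-analyticity in $(t,\phi)$, while the paper uses continuity of $p$ together with the full support of $\nu_\gamma$ on the relative interior of $V$; either justification suffices.
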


\begin{proof}
If $N<\dim\Pi_n(V)$, there is a nonzero $p\in\Pi_n(V)$ vanishing at all nodes.  Exactness applied to $\abs p^2$ gives $\norm p_{L^2(\nu_\gamma)}=0$.  Since $\nu_\gamma$ has full support on the relative interior of $V$ and $p$ is continuous, the trace polynomial must vanish identically on $V$, a contradiction.
\end{proof}

Consequently,
\[
\frac{N_n^{\pr}}{D_n}=2-\frac1{n+1}.
\]
The next section identifies the symmetric disk problem behind this factor-of-two gap.

\section{The quadratic map as a measure-preserving quotient}\label{sec:quotient}

Let $\B^2=\{(u,v)\in\R^2:u^2+v^2\le1\}$ and let $\mu_\gamma$ be given by \eqref{eq:mu}. { We denote by $\Pi_m(\B^2)$ the restrictions to
$\B^2$ of bivariate polynomials of total degree at most $m$.}

\subsection{Measure identity}
{ The measure-theoretic compatibility of $Q$ with the disk
and cone weights is summarized by the following pushforward identity.}

\begin{theorem}\label{thm:measure-lifting}
For every integrable $f:V\to\C$,
\begin{equation}\label{eq:measure-lifting}
\int_V f(x,y,t)\,\dd\nu_\gamma
=
\int_{\B^2}f(Q(u,v))\,\dd\mu_\gamma(u,v).
\end{equation}
Equivalently, $Q_\#\mu_\gamma=\nu_\gamma$.
\end{theorem}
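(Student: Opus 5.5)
The plan is a direct change of variables in polar coordinates on the disk, followed by a $2$-to-$1$ angular reduction. Write $(u,v)=(r\cos\theta,r\sin\theta)$ with $0\le r\le1$ and $\theta\in[0,2\pi)$. Then
\[
\dd\mu_\gamma=\frac{\gamma+1}{\pi}(1-r^2)^\gamma r\,\dd r\,\dd\theta .
\]
By \eqref{eq:Q} and the polar description of $Q$ given in the introduction, $Q(r\cos\theta,r\sin\theta)$ is the cone point with parameters $t=r^2$ and $\phi=2\theta$. That is, $Q(u,v)=(t\cos\phi,t\sin\phi,t)$.

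We first prove \eqref{eq:measure-lifting} for nonnegative measurable $f$; Tonelli's theorem justifies every manipulation in that case. The integrable complex case then follows by splitting $f$ into real and imaginary parts, and each of those into positive and negative parts.

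The computation has three steps.

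\begin{enumerate}[label=\textnormal{(\arabic*)}]
\item \emph{Angular reduction.} Write $g(t,\phi):=f(t\cos\phi,t\sin\phi,t)$, which is $2\pi$-periodic in $\phi$. For fixed $r$, the substitution $\phi=2\theta$ together with periodicity gives
\[
\int_0^{2\pi}g(r^2,2\theta)\,\dd\theta=\frac12\int_0^{4\pi}g(r^2,\phi)\,\dd\phi=\int_0^{2\pi}g(r^2,\phi)\,\dd\phi .
\]
This is precisely where the two preimages $\pm z$ of each cone point are absorbed.

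\item \emph{Radial substitution.} Substitute $t=r^2$, so that $\dd t=2r\,\dd r$. Then
\[
\frac{\gamma+1}{\pi}(1-r^2)^\gamma r\,\dd r=\frac{\gamma+1}{2\pi}(1-t)^\gamma\,\dd t .
\]

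\item \emph{Comparison.} Combining the first two steps turns the right-hand side of \eqref{eq:measure-lifting} into
\[
\frac{\gamma+1}{2\pi}\int_0^1\!\!\int_0^{2\pi}g(t,\phi)(1-t)^\gamma\,\dd\phi\,\dd t ,
\]
which is $\int_V f\,\dd\nu_\gamma$ by \eqref{eq:nu}.
\end{enumerate}

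This establishes \eqref{eq:measure-lifting} for every integrable $f$. Taking $f$ to be an indicator function of a Borel set in $V$ gives $Q_\#\mu_\gamma=\nu_\gamma$. Conversely, the pushforward identity implies \eqref{eq:measure-lifting} by the abstract change-of-variables formula, so the two formulations are equivalent.

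There is no real obstacle here. The only care needed is in the measure-theoretic bookkeeping. The sets $\{r=0\}$ (the apex) and $\{t=1\}$ are null for both measures, so we may ignore them. Integrability of $f$ with respect to $\nu_\gamma$ is equivalent to integrability of $f\circ Q$ with respect to $\mu_\gamma$: apply the nonnegative case to $\abs f$. This ensures both sides of \eqref{eq:measure-lifting} are defined simultaneously, including when $-1<\gamma<0$ and the weight is singular at the rim.
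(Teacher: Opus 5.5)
Your proposal is correct and follows essentially the same route as the paper's proof. The paper also uses polar coordinates with $t=r^2$, $\phi=2\theta$, $r\,\dd r=\dd t/2$, and absorbs the two preimages $\pm z$ through the $\pi$-periodicity of the angular integrand. Your Tonelli-and-splitting bookkeeping and the indicator-function argument for $Q_\#\mu_\gamma=\nu_\gamma$ spell out details that the paper leaves implicit.
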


\begin{proof}
Write $u=r\cos\theta$, $v=r\sin\theta$.  Then
\[
Q(r,\theta)=(r^2\cos2\theta,r^2\sin2\theta,r^2),
\]
so $t=r^2$, $\phi=2\theta$, and $r\dd r=\dd t/2$.  Hence
\begin{align*}
\int_{\B^2}f(Q(u,v))\,\dd\mu_\gamma
&=\frac{\gamma+1}{\pi}\int_0^1\int_0^{2\pi}
 f(r^2\cos2\theta,r^2\sin2\theta,r^2)(1-r^2)^\gamma r\,\dd\theta\,\dd r\\
&=\frac{\gamma+1}{2\pi}\int_0^1(1-t)^\gamma
 \int_0^{2\pi}f(t\cos2\theta,t\sin2\theta,t)\,\dd\theta\,\dd t.
\end{align*}
The inner integrand is $\pi$-periodic in $\theta$, and the change of variable $\phi=2\theta$ gives \eqref{eq:measure-lifting}.
\end{proof}

{ This measure identity also identifies conic functions isometrically with even disk
functions.}

\begin{corollary}\label{cor:even-isometry}
The pullback $Q^*f=f\circ Q$ defines an
isometric isomorphism 
\[
L^2(V,\nu_\gamma)\cong L^2_{\even}(\B^2,\mu_\gamma),
\qquad
L^2_{\even}(\B^2,\mu_\gamma)=\{F:F(-z)=F(z) {  \ \ a.e.}\}.
\]
{ It also defines an isometric isomorphism  $C(V)\cong C_{\even}(\B^2)$ with respect to the
uniform norm.}
\end{corollary}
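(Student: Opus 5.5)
The plan is to derive everything from \Cref{thm:measure-lifting} together with the fact that $Q$ maps $\B^2$ onto $V$ and identifies exactly the pairs $\pm z$. I will first settle the $L^2$ statement, then the $C$ statement, and I will use the same surjectivity argument for both.

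\emph{Isometry into the even subspace.} For $f\in L^2(V,\nu_\gamma)$, apply \eqref{eq:measure-lifting} to $\abs f^2$ to get $\norm{f\circ Q}_{L^2(\mu_\gamma)}=\norm f_{L^2(\nu_\gamma)}$. Applying it to $\abs{f-g}^2$ shows that if $f=g$ $\nu_\gamma$-a.e., then $f\circ Q=g\circ Q$ $\mu_\gamma$-a.e. Hence $Q^*$ is well defined on equivalence classes, linear, and isometric. Since $Q(-z)=Q(z)$ holds pointwise, $f\circ Q$ is even.

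\emph{Surjectivity onto $L^2_{\even}$.} This is the main step. Given an even $F\in L^2_{\even}(\B^2,\mu_\gamma)$, I define $f$ on $V$ by $f(Q(z))=F(z)$. This is unambiguous because $Q(z)=Q(w)$ forces $w=\pm z$: the map $z\mapsto(z^2,\abs z^2)$ determines $z$ up to sign. To handle an evenness that holds only a.e., I replace $F$ by its symmetrization $\tfrac12(F(z)+F(-z))$, which agrees with $F$ a.e. and is even everywhere.

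Measurability of $f$ is easiest in polar coordinates. Writing $f(t,\phi)=F(\sqrt t\,e^{i\phi/2})$ on $(0,1]\times[0,2\pi)$ gives a composition of a Borel function with a continuous map on a full-measure set; the vertex $t=0$ is $\nu_\gamma$-null. Then $F=f\circ Q$ a.e., and the isometry gives $f\in L^2(\nu_\gamma)$. An isometry that is onto is an isometric isomorphism.

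\emph{The $C$ statement.} For $f\in C(V)$, the pullback $f\circ Q$ is continuous and even, and $\sup_{\B^2}\abs{f\circ Q}=\sup_V\abs f$ because $Q(\B^2)=V$. Indeed, every $(t\cos\phi,t\sin\phi,t)$ equals $Q(\sqrt t\,e^{i\phi/2})$. For surjectivity, given a continuous even $F$, the function $f$ defined above is continuous. The reason is that $Q:\B^2\to V$ is a continuous surjection from a compact space onto a Hausdorff space, hence a closed quotient map. A function on $V$ whose pullback is continuous is therefore continuous. This avoids any explicit treatment of the branch of $\sqrt{\cdot}$ at the vertex and across the cut $\phi=0$.
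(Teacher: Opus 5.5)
Your proposal is correct and takes the same route as the paper. The isometry and evenness come from the pushforward identity and $Q(z)=Q(-z)$. Surjectivity comes from the fibers of $Q$ being exactly $\{z,-z\}$ (and $\{0\}$), and the continuous case from $Q$ being a quotient map from a compact space onto a Hausdorff space. You make explicit the symmetrization and measurability details that the paper compresses into ``the usual almost-everywhere identification.'' The only step to phrase carefully is concluding $f\in L^2(\nu_\gamma)$: this uses the measure identity for nonnegative measurable functions, which the same change-of-variables argument provides.
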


\begin{proof}
The isometry follows from \Cref{thm:measure-lifting}.  Since $Q(z)=Q(-z)$, the range is even.  Conversely, every nonzero fiber of $Q$ is exactly $\{z,-z\}$ and the origin has the single preimage $0$, so an even measurable function is constant on fibers and factors through $Q$ up to the usual almost-everywhere identification.  For continuous functions, $Q:\B^2\to V$ is a continuous surjection from a compact space onto a Hausdorff space and hence a quotient map.  The uniform norm is preserved.
\end{proof}

\subsection{Full polynomial trace spaces}

The basis-level quadratic transformation is classical in the present parameter regime~\cite{XuCone2020,DLMF}.  The discrete theory requires the following statement for the entire trace spaces.

\begin{theorem}\label{thm:poly-quotient}
For every integer $m\ge0$, pullback by $Q$ is a linear isomorphism
\begin{equation}\label{eq:poly-quotient}
Q^*:\Pi_m(V)\longrightarrow\Pi^{\even}_{2m}(\B^2),
\qquad
\Pi^{\even}_{2m}(\B^2)=\{P\in\Pi_{2m}(\B^2):P(-z)=P(z)\}.
\end{equation}
Moreover, the map is unitary for the corresponding $L^2$ inner products.
\end{theorem}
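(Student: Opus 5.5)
The plan is to prove three things in turn: $Q^*$ maps $\Pi_m(V)$ into $\Pi^{\even}_{2m}(\B^2)$, it is injective, and the two dimensions agree. Unitarity will then come for free from \Cref{thm:measure-lifting}.

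\emph{Inclusion.} Take $P\in\Pi_m(\R^3)$. Each coordinate of $Q$ is a homogeneous quadratic in $(u,v)$, so $P\circ Q$ is a polynomial of total degree at most $2m$. It is even because $Q(-z)=Q(z)$. Hence $Q^*\Pi_m(V)\subseteq\Pi^{\even}_{2m}(\B^2)$. The map is well defined on traces: if $P|_V=0$, then $P\circ Q=0$ on $\B^2$, since $Q(\B^2)\subseteq V$.

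\emph{Injectivity.} The map $Q:\B^2\to V$ is surjective: any point $(t\cos\phi,t\sin\phi,t)$ is the image of $z=\sqrt t\,e^{i\phi/2}$. So if $P\circ Q$ vanishes on $\B^2$, then $P|_V=0$.

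\emph{Dimension count.} By \eqref{eq:dimension}, $\dim\Pi_m(V)=(m+1)^2$. On the disk side, $\Pi_{2m}(\B^2)$ is the space of bivariate polynomials of degree $\le 2m$, because the disk has nonempty interior. Its even subspace is spanned by the monomials of even total degree $2j$ with $0\le j\le m$. There are $2j+1$ such monomials in each degree $2j$, giving
\[
\sum_{j=0}^m(2j+1)=(m+1)^2 .
\]
An injective linear map between spaces of equal finite dimension is an isomorphism.

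As a cross-check on surjectivity, I would also verify it directly on the spanning set \eqref{eq:fourier-radial}. In complex form we have $t=z\bar z$ and $x+iy=z^2$, so
\[
Q^*\bigl(t^\ell e^{ik\phi}\bigr)=\abs z^{2(\ell-|k|)}z^{2k}
\]
for $k\ge0$, with $\bar z$ in place of $z$ for $k<0$. These functions are exactly the even monomials $z^a\bar z^b$ with $a+b=2\ell$.

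\emph{Unitarity.} This follows from \Cref{thm:measure-lifting}, applied with $f=p\bar q$ (or via \Cref{cor:even-isometry}):
\[
\langle p,q\rangle_{L^2(\nu_\gamma)}=\langle Q^*p,Q^*q\rangle_{L^2(\mu_\gamma)} .
\]

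The only delicate point is the dimension count on the disk side. One must confirm that evenness of a polynomial, $P(-z)=P(z)$, is equivalent to all its odd-degree homogeneous parts vanishing. This holds because the homogeneous parts satisfy $P_d(-z)=(-1)^dP_d(z)$ and are independent as functions on the open disk.
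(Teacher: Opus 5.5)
Your proof is correct. It shares the inclusion, injectivity and unitarity steps with the paper but proves surjectivity differently.

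The paper argues constructively via invariant theory. The ring of polynomials invariant under $z\mapsto -z$ is generated by $u^2,uv,v^2$, and the relations $u^2=(t+x)/2$, $v^2=(t-x)/2$, $uv=y/2$ make these linear in $(x,y,t)$. So each even homogeneous piece of degree $2j$ is explicitly the pullback of a degree-$j$ polynomial on $V$. The paper then records the identity $\dim\Pi^{\even}_{2m}(\B^2)=(m+1)^2=\dim\Pi_m(V)$ as a \emph{consequence} of the theorem.

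You instead take $\dim\Pi_m(V)=(m+1)^2$ from \eqref{eq:dimension} as an input, count even monomials, and conclude from injectivity plus equal dimensions. This is not circular, since \eqref{eq:dimension} was established independently from the Fourier--radial description in Section~2, and it is shorter. The cost is that your argument depends on that earlier description of the trace space and gives no explicit inverse. The paper's argument is self-contained and yields the inverse substitution directly.

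Your complex-monomial ``cross-check'' is in fact a complete constructive surjectivity proof by itself. Each $z^a\bar z^b$ with $a+b=2\ell$ is the pullback of $t^{\ell-|k|}(x\pm iy)^{|k|}$, where $k=(a-b)/2$ and the sign is that of $k$. Real surjectivity then follows because $Q^*$ commutes with complex conjugation. So either half of your surjectivity argument would suffice on its own.

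You also make explicit a point the paper uses tacitly when it decomposes into homogeneous pieces: an even polynomial on the disk has only even-degree homogeneous parts.
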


\begin{proof}
Every polynomial in $x,y,t$ of degree at most $m$ becomes, after substitution \eqref{eq:Q}, an even polynomial in $u,v$ of degree at most $2m$.  Hence
$Q^*\Pi_m(V)\subseteq\Pi^{\even}_{2m}(\B^2)$.

Conversely, the invariant ring of $(u,v)\mapsto(-u,-v)$ is generated by $u^2,uv,v^2$, and
\begin{equation}\label{eq:quadratic-generators}
u^2=\frac{t+x}{2},\qquad v^2=\frac{t-x}{2},\qquad uv=\frac y2.
\end{equation}
Decompose an even polynomial into homogeneous pieces.  A homogeneous piece of degree $2j$ is a polynomial of degree $j$ in $u^2,uv,v^2$, hence is the pullback of a polynomial in $x,y,t$ of degree $j$.  The relation
\[
(u^2-v^2)^2+(2uv)^2=(u^2+v^2)^2
\]
is precisely the defining relation of $V$, so the representation is well defined on the quotient.  Injectivity follows from $Q(\B^2)=V$, and unitarity follows from \Cref{cor:even-isometry}.
\end{proof}

\begin{remark}\label{rem:classical-transform}
No new basis transformation is asserted in \Cref{thm:poly-quotient}.  Its role is to promote the classical quadratic identity to a statement about full trace spaces and then combine it with positivity, symmetry, and exactness.  The genuinely discrete consequences begin in \Cref{sec:cubature}.
\end{remark}

The quotient also explains the dimension identity
\[
\dim\Pi^{\even}_{2m}(\B^2)=\sum_{j=0}^m(2j+1)=(m+1)^2=\dim\Pi_m(V).
\]
Thus conic degree $m$ corresponds exactly to even disk degree $2m$.

\section{Positive cubature compression and exact conditioning}\label{sec:cubature}

\subsection{Exact cone--disk correspondence}

A disk cubature rule is called centrally symmetric if every nonzero node $z$ occurs together with $-z$ with the same weight.

{ At the discrete level, $Q$ transfers positive cone rules to centrally symmetric
positive disk rules and pushes arbitrary positive disk rules back to the cone.}

\begin{theorem}\label{thm:cubature-equivalence}
Fix $n\ge0$.
\begin{enumerate}[label=\textnormal{(\roman*)}]
\item Let $\sum_{j=1}^N\lambda_j f(\xi_j)$ be a positive cubature formula on $(V,\nu_\gamma)$ exact on $\Pi_{2n}(V)$.  Choose one preimage $z_j\in\B^2$ with $Q(z_j)=\xi_j$ for every non-apex node.  Replace each such node by the pair $\pm z_j$, each with weight $\lambda_j/2$; if the apex occurs, retain the origin with its original weight.  The resulting disk formula is positive, centrally symmetric, and exact on $\Pi_{4n+1}(\B^2)$.
\item {  More generally, pushing any positive disk cubature formula exact on
$\Pi_{4n}(\B^2)$ through $Q$, and merging nodes with the same image, gives a positive cone
cubature formula exact on $\Pi_{2n}(V)$.}
\item[(iii)] { 
Restricted to centrally symmetric positive disk rules of degree $4n+1$, the
two constructions are inverse; hence the lifting in~(i) is a bijection
}
\end{enumerate}
\end{theorem}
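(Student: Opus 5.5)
The three parts all reduce to the pushforward identity of \Cref{thm:measure-lifting} and the polynomial quotient of \Cref{thm:poly-quotient}; the only new ingredient is parity.

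For (i), take $P\in\Pi_{4n+1}(\B^2)$ and split it as $P=P_{\even}+P_{\mathrm{odd}}$, where $P_{\mathrm{odd}}(-z)=-P_{\mathrm{odd}}(z)$.
\begin{itemize}
\item \emph{Odd part.} The symmetrized disk rule annihilates $P_{\mathrm{odd}}$: each pair $\pm z_j$ carries equal weights $\lambda_j/2$, and an odd polynomial vanishes at the origin. Since $\mu_\gamma$ is rotation invariant, $\int P_{\mathrm{odd}}\dd\mu_\gamma=0$ as well.
\item \emph{Even part.} $P_{\even}$ involves only monomials of even degree, so it has degree at most $4n$. By \Cref{thm:poly-quotient}, $P_{\even}=Q^*p$ with $p\in\Pi_{2n}(V)$.
\item \emph{Transfer.} On the lifted nodes, $P_{\even}(\pm z_j)=p(\xi_j)$, so the disk sum equals $\sum_j\lambda_j p(\xi_j)$. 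Exactness of the cone rule gives $\int_V p\dd\nu_\gamma$, and \Cref{thm:measure-lifting} converts this to $\int P_{\even}\dd\mu_\gamma$.
\end{itemize}
Positivity and central symmetry hold by construction, and distinct cone nodes give distinct disk pairs because the fibers of $Q$ are exactly $\{z,-z\}$.

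For (ii), start from a positive disk rule $\sum_i w_i F(z_i)$ exact on $\Pi_{4n}(\B^2)$. Define the cone rule with nodes $Q(z_i)$ and weights $w_i$, merging nodes with the same image by adding their weights; positivity is preserved under merging. For $p\in\Pi_{2n}(V)$, the pullback $Q^*p$ lies in $\Pi_{4n}(\B^2)$, so
\[
\sum_i w_i p(Q(z_i))=\int_{\B^2}Q^*p\dd\mu_\gamma=\int_V p\dd\nu_\gamma,
\]
again by \Cref{thm:measure-lifting}. Central symmetry is not needed here.

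For (iii), argue in two directions.
\begin{itemize}
\item \emph{Push after lift.} Lifting a cone rule by (i) and then pushing it forward by (ii) merges each pair $\pm z_j$ back to $\xi_j$ with weight $\lambda_j/2+\lambda_j/2=\lambda_j$, and leaves the apex unchanged. This returns the original rule.
\item \emph{Lift after push.} Take a centrally symmetric positive disk rule of degree $4n+1$, with distinct nodes. Its nonzero nodes come in pairs $\pm z$ with common weight $w$, and these pairs are exactly the nonzero fibers of $Q$. Pushing forward gives node $Q(z)$ with weight $2w$, and no other node maps to $Q(z)$. Lifting then splits this back into $\pm z$ with weight $w$. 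The choice of preimage in (i) is irrelevant, since both preimages yield the same pair. The origin maps to the apex and back, with the same weight.
\end{itemize}
Hence the two maps are mutually inverse, and the lifting in (i) is a bijection onto centrally symmetric positive rules of degree $4n+1$.

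The only point needing care is the parity claim in (i): a polynomial of degree at most $4n+1$ has even part of degree at most $4n$. This is why degree $4n+1$, rather than $4n$, is obtained for free. Everything else is bookkeeping with fibers.
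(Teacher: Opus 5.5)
Your proof is correct and follows the paper's argument step for step: for (i), the even/odd splitting, with the even part of degree at most $4n$ pulled back via \Cref{thm:poly-quotient} and then \Cref{thm:measure-lifting}; for (ii), the direct pullback; and for (iii), the fiber structure $\{z,-z\}$ of $Q$. Your two-directional check in (iii), including the remark that the choice of preimage is immaterial, spells out what the paper compresses into a single sentence.
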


\begin{proof}
For (i), let $P\in\Pi_{4n+1}(\B^2)$ and write
\[
P=P_++P_-,\qquad P_\pm(z)=\tfrac12(P(z)\pm P(-z)).
\]
The symmetric discrete rule and the radial measure $\mu_\gamma$ annihilate $P_-$.  The even part has degree at most $4n$, so by \Cref{thm:poly-quotient} there exists $p\in\Pi_{2n}(V)$ such that $P_+=p\circ Q$.  The disk sum of $P_+$ equals the cone sum of $p$, which equals its cone integral; \Cref{thm:measure-lifting} identifies this with the disk integral of $P_+$.  Hence the lifted rule is exact through degree $4n+1$.

{ The lifted nodes are distinct: a non-apex preimage satisfies $z_j\ne0$, hence
$z_j\ne-z_j$; moreover, if $\xi_j\ne\xi_k$, then $z_j\ne\pm z_k$, because
$Q(\pm z_k)=\xi_k$.}

{ For~(ii), if $p\in\Pi_{2n}(V)$ then $p\circ Q\in\Pi_{4n}^\even(\B^2)$. Disk exactness
and Theorem~\ref{thm:measure-lifting} give the required cone exactness. Positivity is preserved
when coincident images are merged by adding their weights.  }{ Finally, central symmetry
fixes both members and the equal weights of each nonzero fiber $\{z,-z\}$, so~(iii) follows.}
\end{proof}
{ Write $M_\disk$ and $N_\cone$ for the disk and cone node counts, respectively;
the correspondence fixes their relation.}
\begin{corollary}\label{cor:node-count}
If the lifted disk rule contains the origin, then
\[
M_{\disk}=2N_{\cone}-1,
\qquad
N_{\cone}=\frac{M_{\disk}+1}{2}.
\]
If it does not contain the origin, then $M_{\disk}=2N_{\cone}$.
\end{corollary}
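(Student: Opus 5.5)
The plan is to count nodes directly from the lifting construction in \Cref{thm:cubature-equivalence}(i), after first settling that the apex is the only point of $V$ with a single preimage.

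The fiber structure of $Q$ follows from $Q(z)=(\operatorname{Re}z^2,\operatorname{Im}z^2,\abs z^2)$. A point of $V$ determines $z^2$, so its preimages are exactly the two square roots $\pm z$. These coincide only when $z=0$, which is the case $Q(z)=(0,0,0)$, the apex. Consequently the origin lies in the lifted disk rule precisely when the apex is a node of the cone rule.

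Next I partition the $N_\cone$ cone nodes into the apex, if present, and the non-apex nodes. By construction each non-apex node $\xi_j$ contributes the pair $\pm z_j$ with $z_j\ne 0$. The apex, if present, contributes only the origin.

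The proof of \Cref{thm:cubature-equivalence}(i) has already shown that all these lifted nodes are distinct. Since every weight $\lambda_j/2$ or $\lambda_j$ is strictly positive, no node is discarded. Hence $M_\disk$ is exactly the number of lifted points, and there are two cases.
\begin{itemize}[label=\textendash]
\item If the apex occurs, then $M_\disk=2(N_\cone-1)+1=2N_\cone-1$, which is equivalent to $N_\cone=(M_\disk+1)/2$.
\item If the apex does not occur, then $M_\disk=2N_\cone$.
\end{itemize}

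There is no real obstacle here. The only point needing care is the equivalence ``origin in the disk rule $\iff$ apex in the cone rule'', together with the requirement that no cancellation or merging occurs. Both are settled by the fiber description and by the distinctness statement already proved.
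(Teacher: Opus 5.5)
Your proposal is correct and follows the paper's approach. The paper gives no separate proof; the corollary is read off from the lifting in \Cref{thm:cubature-equivalence}(i) exactly as you do. Each non-apex node contributes the two distinct points $\pm z_j\neq 0$, the apex contributes only the origin, and distinctness with positive weights means no merging or discarding occurs.
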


\subsection{M\"oller's lower bound and the exact excess identity}

For a centrally symmetric measure in two variables, M\"oller's lower bound for a degree-$(2k-1)$ cubature rule is~\cite{Moller1976}
\begin{equation}\label{eq:moller}
M\ge\dim\Pi_{k-1}(\R^2)+\left\lfloor\frac{k}{2}\right\rfloor.
\end{equation}

\begin{theorem}[M\"oller quotient bound]\label{thm:moller-quotient}
Every positive degree-$2n$ cubature formula on $(V,\nu_\gamma)$ satisfies
\[
N\ge(n+1)^2.
\]
Under the quadratic lifting, this is exactly M\"oller's lower bound for centrally symmetric degree-$(4n+1)$ disk cubature.
\end{theorem}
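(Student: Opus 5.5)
The inequality $N\ge(n+1)^2$ itself is already contained in \Cref{prop:dimension-lower}: a positive rule exact on $\Pi_{2n}(V)$ cannot have fewer nodes than $\dim\Pi_n(V)=(n+1)^2$. The point to establish is therefore the second sentence. Concretely, we show that if M\"oller's bound \eqref{eq:moller} is applied to the lifted disk rule and then translated back through \Cref{cor:node-count}, it yields exactly $N\ge(n+1)^2$, with no loss and no gain.

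First, apply \Cref{thm:cubature-equivalence}(i) to the given cone rule. This produces a positive, centrally symmetric disk rule exact on $\Pi_{4n+1}(\B^2)$ for the radial, hence centrally symmetric, measure $\mu_\gamma$. In \eqref{eq:moller} take $2k-1=4n+1$, that is, $k=2n+1$. Then
\[
M_\disk\ge\dim\Pi_{2n}(\R^2)+\left\lfloor\tfrac{2n+1}{2}\right\rfloor=(n+1)(2n+1)+n=2n^2+4n+1=2(n+1)^2-1 .
\]

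Second, convert this into a cone count using the two cases of \Cref{cor:node-count}. If the origin is a node, then $N_\cone=(M_\disk+1)/2\ge(n+1)^2$. If the origin is not a node, then $M_\disk=2N_\cone$ is even, so the odd bound above forces $M_\disk\ge 2(n+1)^2$, and again $N_\cone\ge(n+1)^2$. In both cases the translated M\"oller bound coincides with the cone dimension bound.

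Third, the bounds agree in the converse direction as well. By the bijection in \Cref{thm:cubature-equivalence}(iii), a cone rule with exactly $(n+1)^2$ nodes corresponds to a centrally symmetric disk rule with $2(n+1)^2-1$ nodes (if the origin is present) or $2(n+1)^2$ nodes (if not). So cone rules attaining $N=(n+1)^2$ correspond exactly to symmetric disk rules attaining M\"oller's bound, or exceeding it by one when the origin is absent; this parity offset is the even--odd excess identity.

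The only delicate step is the parity argument in the no-origin case, which is what makes the correspondence exact rather than off by one. The rest is arithmetic plus the quoted form of M\"oller's bound for centrally symmetric measures with $k=2n+1$ odd.
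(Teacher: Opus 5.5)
Your proposal is correct and follows the paper's proof essentially verbatim. You set $k=2n+1$ in M\"oller's bound to obtain $M_{\disk}\ge 2(n+1)^2-1$, then apply \Cref{cor:node-count}, with the parity argument handling the apex-free case. Your extra remarks are accurate and consistent with \Cref{cor:excess}: \Cref{prop:dimension-lower} already gives the inequality, and the correspondence between extremal cone and disk rules holds.
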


\begin{proof}
Set $2k-1=4n+1$, so $k=2n+1$.  Then \eqref{eq:moller} gives
\[
M_{\disk}\ge \dim\Pi_{2n}(\R^2)+n
=\frac{(2n+1)(2n+2)}2+n
=2(n+1)^2-1.
\]
If the origin is present, \Cref{cor:node-count} gives $N_{\cone}\ge(n+1)^2$.  If it is absent, $M_{\disk}=2N_{\cone}$ is even, whereas the lower bound is odd, so $M_{\disk}\ge2(n+1)^2$ and again $N_{\cone}\ge(n+1)^2$.
\end{proof}
{ The coincidence of the two lower bounds yields an exact relation between the
sampling excesses.}
\begin{corollary}\label{cor:excess}
Let
\[
D_n=(n+1)^2,
\qquad
M_n^{\mathrm M}=2D_n-1.
\]
{ Here $M_n^{\mathrm M}$ denotes M\"oller's disk lower bound at degree $4n+1$.} For a lifted centrally symmetric disk rule,
\begin{equation}\label{eq:excess}
M_{\disk}-M_n^{\mathrm M}
=2(N_{\cone}-D_n)+\delta,
\qquad 
\delta=
\begin{cases}
0,&0\text{ is a disk node},\\
1,&0\text{ is not a disk node}.
\end{cases}
\end{equation}
Thus the disk excess is exactly twice the cone excess when the origin is present, and differs from twice the cone excess only by the unavoidable parity offset otherwise.
\end{corollary}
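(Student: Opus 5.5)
The identity is pure bookkeeping on top of \Cref{cor:node-count}: we take the two cases of that corollary and subtract $M_n^{\mathrm M}=2D_n-1$ from $M_{\disk}$.

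\emph{Origin present.} The lifted disk rule contains the origin exactly when the cone rule uses the apex. In that case \Cref{cor:node-count} gives $M_{\disk}=2N_{\cone}-1$. Then
\[
M_{\disk}-M_n^{\mathrm M}=(2N_{\cone}-1)-(2D_n-1)=2(N_{\cone}-D_n),
\]
which is \eqref{eq:excess} with $\delta=0$.

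\emph{Origin absent.} Here \Cref{cor:node-count} gives $M_{\disk}=2N_{\cone}$. Then
\[
M_{\disk}-M_n^{\mathrm M}=2N_{\cone}-2D_n+1=2(N_{\cone}-D_n)+1,
\]
which is \eqref{eq:excess} with $\delta=1$.

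It remains to justify the interpretive sentence. \Cref{thm:moller-quotient} gives $N_{\cone}\ge D_n$, so both excesses are nonnegative. In the second case $M_{\disk}$ is even while $M_n^{\mathrm M}$ is odd, so the offset $1$ is forced by parity; this is the ``unavoidable'' parity offset. Nothing here poses a real obstacle. The only point needing care is that ``lifted'' refers to the construction of \Cref{thm:cubature-equivalence}(i). There the origin appears precisely when the apex is a cone node, and every other cone node contributes exactly two distinct disk nodes, which is what makes \Cref{cor:node-count} applicable.
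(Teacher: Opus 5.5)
Your proposal is correct and is essentially the paper's argument: the paper states this corollary without a separate proof, as an immediate consequence of \Cref{cor:node-count}, and subtracting $M_n^{\mathrm M}=2D_n-1$ in the two cases $M_{\disk}=2N_{\cone}-1$ and $M_{\disk}=2N_{\cone}$ is exactly that. Your parity remark for the apex-free case likewise matches the parity reasoning the paper uses in the proof of \Cref{thm:moller-quotient}.
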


\subsection{Rigorous low-degree compression for $\gamma=0$}

For $\gamma=0$, the catalogue summarized and rederived by Benouahmane, Cuyt and Yaman~\cite{BenouahmaneCuytYaman2019} contains centrally symmetric near-minimal rules of degrees $5,9,13,17$ with $7,19,35,57$ nodes, respectively, with nodes in the closed disk and nonnegative weights.  Pushing them through $Q$ gives \Cref{tab:rigorous-compression}; zero-weight nodes in a tabulated representation are discarded.

{ The tabulated disk constructions therefore yield the following low-degree non-product cone rules.}
\begin{corollary}\label{cor:low-degree}
For $\gamma=0$, the disk rules summarized in~\cite{BenouahmaneCuytYaman2019} push forward to non-product degree-$2n$ cone formulas with nonnegative weights and the support sizes in \Cref{tab:rigorous-compression}.  If a representation contains a zero-weight node, deleting it gives a positive rule with at most the displayed number of nodes.
\end{corollary}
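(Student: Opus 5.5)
The plan is to apply \Cref{thm:cubature-equivalence}(ii) to each tabulated disk rule, so the corollary reduces to checking that rule's hypotheses and then counting nodes. First, for each $n=1,2,3,4$ I take the centrally symmetric $\gamma=0$ rule of degree $4n+1\in\{5,9,13,17\}$ from~\cite{BenouahmaneCuytYaman2019}. I will verify three properties: its nodes lie in the closed disk $\B^2$, its weights are nonnegative, and it is exact on $\Pi_{4n+1}(\B^2)\supseteq\Pi_{4n}(\B^2)$ for the normalized Lebesgue measure $\mu_0$. If the tabulated rule is normalized to the area $\pi$, I rescale the weights by $1/\pi$; this preserves nonnegativity. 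Before pushing forward, I delete any zero-weight nodes. This leaves a positive rule that is still exact, so \Cref{thm:cubature-equivalence}(ii) applies verbatim.

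Next, pushing the rule through $Q$ and merging coincident images gives a cone formula that is positive and exact on $\Pi_{2n}(V)$; here I use \Cref{thm:measure-lifting} together with $Q^*\Pi_{2n}(V)\subseteq\Pi^{\even}_{4n}(\B^2)$. The node count follows from central symmetry. The disk nodes group into pairs $\{z,-z\}$ with equal weights, plus possibly the origin. Each pair maps to a single cone node, so the support size is $(M_\disk+\varepsilon)/2$, where $\varepsilon=1$ if the origin is a node and $\varepsilon=0$ otherwise (compare \Cref{cor:node-count}). With $M_\disk=7,19,35,57$ and the origin present, this gives $4,10,18,29$, matching the table. If a tabulated representation includes a zero-weight node, deleting it can only lower the count; this gives the clause ``at most the displayed number''. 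Distinct pairs cannot merge further, because $Q(z)=Q(w)$ forces $w=\pm z$.

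Finally, I argue that the resulting rules are non-product. A product rule as in \Cref{prop:product} of degree $2n$ has $(n+1)(2n+1)=6,15,28,45$ nodes, which is strictly more than $4,10,18,29$. More intrinsically, I can argue from the node sets themselves: the cone nodes do not form a grid in $(t,\phi)$ with equispaced angles on each circle. I would check this directly from the tabulated radii and angles after applying $t=r^2$ and $\phi=2\theta$.

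The main obstacle is the bookkeeping against the external catalogue rather than any theory. I must confirm that each cited rule really is centrally symmetric with equal weights on antipodal nodes, that it contains the origin whenever the odd count requires it, and that its normalization agrees with $\mu_0$. If a tabulated rule is only symmetric up to a relabelling, I would first symmetrize it, replacing it by the average of itself and its reflection $z\mapsto -z$. This preserves exactness and nonnegativity, but it could raise the count, so I would confirm the displayed counts against the explicit node lists.
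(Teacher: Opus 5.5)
Your proposal is correct and matches the paper's implicit argument: discard zero-weight nodes, apply \Cref{thm:cubature-equivalence}(ii) to the catalogued disk rules, and count fibres $\{z,-z\}$ as in \Cref{cor:node-count}, which gives $(M_\disk+1)/2=4,10,18,29$. One caution: justify ``non-product'' only by the node-count comparison with \Cref{prop:product}. Your intrinsic grid check fails at $n=1$: there the apex plus an equilateral triangle at $t=2/3$, all with weight $1/4$, is exactly a two-point Gauss--Radau $\times$ three-point trapezoidal product in $(t,\phi)$.
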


\begin{table}[ht]
\centering
\caption{Rigorous compression obtained from centrally symmetric disk formulas in~\cite{BenouahmaneCuytYaman2019}.  The last two columns display the sampling excess above the information-theoretic dimension $D_n$.}
\label{tab:rigorous-compression}
\small
\begin{tabular}{c c c c c c c c}
\toprule
$n$ & cone deg. & $D_n$ & disk deg. & disk nodes & product nodes & cone nodes & $(N_n^{\pr}-D_n,\,N-D_n)$\\
\midrule
1 & 2 & 4  & 5  & 7  & 6  & 4  & $(2,0)$\\
2 & 4 & 9  & 9  & 19 & 15 & 10 & $(6,1)$\\
3 & 6 & 16 & 13 & 35 & 28 & 18 & $(12,2)$\\
4 & 8 & 25 & 17 & 57 & 45 & 29 & $(20,4)$\\
\bottomrule
\end{tabular}
\end{table}

For $n=1$, the four-point rule consists of the apex and an equilateral triangle on the section $t=2/3$, all with equal weights.  It is the pushforward of the seven-point centrally symmetric degree-five disk rule.

The excess columns in \Cref{tab:rigorous-compression} make the computational gain explicit: the tensor product rule carries an excess that grows rapidly even at low degree, while the transferred non-product rules lie close to the dimension limit.

\subsection{Transfer of optimized disk schemes}

The quotient also transfers numerically optimized disk schemes.  Takaki, Forbes and Rolland~\cite{TakakiForbesRolland2022} report positive-weight disk configurations obtained by solving moment equations numerically.  Their displayed centrally symmetric schemes include degrees $17,21,25,29$ with $55,85,117,155$ points, respectively.  Since these cardinalities are odd, the displayed centrally symmetric configurations contain the origin, and their quotient support counts are $(M+1)/2$.

\begin{table}[t]
\centering
\caption{Compression obtained from the positive-weight disk schemes reported in~\cite{TakakiForbesRolland2022}.  The source schemes are double-precision numerical formulas; the transferred cone counts have the same numerical status.}
\label{tab:numerical-compression}
\small
\begin{tabular}{c c c c c c c}
\toprule
$n$ & cone deg. & $D_n$ & product nodes & disk deg. & disk nodes & cone nodes\\
\midrule
4 & 8  & 25 & 45  & 17 & 55  & 28\\
5 & 10 & 36 & 66  & 21 & 85  & 43\\
6 & 12 & 49 & 91  & 25 & 117 & 59\\
7 & 14 & 64 & 120 & 29 & 155 & 78\\
\bottomrule
\end{tabular}
\end{table}

The implication from disk moments to cone moments is exact; only the supplied disk data are numerical.  Thus \Cref{tab:numerical-compression} should be read with the same floating-point certification level as the source schemes.  In particular, these data are not used in any proof below.

\subsection{Compression preserves exact $L^2$ conditioning}

The next statement is elementary but central for the computational interpretation: reducing the node count does not degrade the exact $L^2$ sampling condition number.

{ Let $K_n^V$ denote the reproducing kernel of $\Pi_n^\mathbb{C}(V)$ in $L^2(\nu_\gamma)$. We use the convention
\[
\langle f,g\rangle = \int_V f\overline{g}\,d\nu_\gamma,\qquad p(\xi) = \langle p, K_n^V(\cdot,\xi)\rangle.
\]
Thus, for any orthonormal basis $\{P_\ell\}_{\ell=1}^{D_n}$,
\[
K_n^V(\eta,\xi) = \sum_{\ell=1}^{D_n} P_\ell(\eta)\overline{P_\ell(\xi)}.
\]

The following proposition records the exact discrete $L^2$ identity and its conditioning consequences.}
\begin{proposition}\label{prop:MZ-conditioning}
Let $\{(\xi_j,\lambda_j)\}_{j=1}^N$ be any positive cone cubature formula exact on $\Pi_{2n}(V)$.  Then for every $p\in\PiC_n(V)$,
\begin{equation}\label{eq:MZ}
\sum_{j=1}^N\lambda_j\abs{p(\xi_j)}^2=\norm p_{L^2(\nu_\gamma)}^2.
\end{equation}
Let $\{P_\ell\}_{\ell=1}^{D_n}$ be any orthonormal basis of $\PiC_n(V)$ and define
\[
\mathrm{S}_{j\ell}=\sqrt{\lambda_j}\,P_\ell(\xi_j).
\]
Then
\begin{equation}\label{eq:Uisometry}
\mathrm{S}^*\mathrm{S}=I_{D_n},
\qquad
\kappa_2(\mathrm{S})=1.
\end{equation}{ 
For a sample vector $a=(a_j)$, set}
\[
\norm a_{\ell^2_\lambda}=\left(\sum_j\lambda_j\abs{a_j}^2\right)^{1/2},
\qquad
Ta=\sum_j\lambda_j a_jK_n^V(\,\cdot\,,\xi_j),
\]
then
\[
\norm T_{\ell^2_\lambda\to L^2(\nu_\gamma)}=1.
\]
\end{proposition}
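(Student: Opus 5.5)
The plan has three parts: the discrete identity, the isometry of the sampling matrix, and the norm of the synthesis operator $T$.

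\textbf{Step 1: the identity \eqref{eq:MZ}.} Take $p\in\PiC_n(V)$ and split it as $p=p_1+ip_2$ with $p_1,p_2\in\Pi_n(V)$ real. Then $\abs p^2=p_1^2+p_2^2$. Products of traces of degree at most $n$ are traces of products of degree at most $2n$, so $p_1^2+p_2^2\in\Pi_{2n}(V)$. Exactness of the rule on $\Pi_{2n}(V)$ then gives \eqref{eq:MZ} directly. For the matrix statements we need the polarized version
\[
\sum_j\lambda_j p(\xi_j)\overline{q(\xi_j)}=\langle p,q\rangle .
\]
This follows the same way, because $p\bar q$ lies in the complexification $\PiC_{2n}(V)$ and exactness has been extended complex-linearly.

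\textbf{Step 2: the matrix identities \eqref{eq:Uisometry}.} Compute the entries of $\mathrm S^*\mathrm S$:
\[
(\mathrm S^*\mathrm S)_{\ell k}=\sum_j\lambda_j\overline{P_\ell(\xi_j)}P_k(\xi_j)=\langle P_k,P_\ell\rangle=\delta_{k\ell}.
\]
So $\mathrm S^*\mathrm S=I_{D_n}$. Hence all singular values of $\mathrm S$ equal one, and $\kappa_2(\mathrm S)=1$. This also forces $N\ge D_n$, consistent with \Cref{prop:dimension-lower}.

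\textbf{Step 3: the norm of $T$.} Write $T$ in coordinates. Define $\Lambda^{1/2}=\mathrm{diag}(\sqrt{\lambda_j})$, and let $U:\PiC_n(V)\to\C^{D_n}$ be the unitary coefficient map for the basis $\{P_\ell\}$. Using $K_n^V(\cdot,\xi_j)=\sum_\ell P_\ell\,\overline{P_\ell(\xi_j)}$, the coefficient of $P_\ell$ in $Ta$ is
\[
\sum_j\lambda_ja_j\overline{P_\ell(\xi_j)}=(\mathrm S^*\Lambda^{1/2}a)_\ell .
\]
The map $a\mapsto b=\Lambda^{1/2}a$ is an isometry from $\ell^2_\lambda$ onto the Euclidean space $\C^N$. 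Therefore
\[
\norm{T}_{\ell^2_\lambda\to L^2(\nu_\gamma)}=\norm{\mathrm S^*}_2=\norm{\mathrm S}_2=1,
\]
by Step 2. The norm is attained, for example, at the sample vector $a_j=p(\xi_j)$ with $p\in\PiC_n(V)$. In that case $Ta=p$ by the reproducing property applied through the discrete inner product, and \eqref{eq:MZ} shows $\norm{a}_{\ell^2_\lambda}=\norm p$.

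The only point needing care is the extension of exactness to the complex products $p\bar q$, which is where the complex-linear convention from \Cref{sec:trace} is used. The rest is linear algebra.
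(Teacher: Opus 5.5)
Your proposal is correct and matches the paper's proof essentially step for step. Exactness applied to $\abs p^2$ and to the products $P_k\overline{P_\ell}$ gives $\mathrm S^*\mathrm S=I_{D_n}$. Identifying the coefficient vector of $Ta$ with $\mathrm S^*\Lambda^{1/2}a$ then gives $\norm T=\norm{\mathrm S^*}_2=1$, with equality on sampled polynomials, which is exactly the paper's observation that equality holds for $\widetilde a\in\ran\mathrm S$; your real/imaginary splitting of $\abs p^2$ only makes explicit the complex-linear convention the paper uses implicitly.
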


\begin{proof}
Since $\abs p^2=p\overline p$ has degree at most $2n$, exactness gives \eqref{eq:MZ}.  Applying it to pairs of basis functions gives $\mathrm{S}^*\mathrm{S}=I_{D_n}$.  Thus all singular values of $\mathrm{S}$ are one.

For a sample vector $a$, set $\widetilde a_j=\sqrt{\lambda_j}a_j$.  The coefficient vector of $Ta$ in the orthonormal basis is $\mathrm{S}^*\widetilde a$, hence
\[
\norm{Ta}_{L^2}=\norm{\mathrm{S}^*\widetilde a}_2\le\norm{\widetilde a}_2=\norm a_{\ell^2_\lambda}.
\]
Equality is attained { for every nonzero} $\widetilde a\in\ran \mathrm{S}$.
\end{proof}

\begin{remark}[A practical certification diagnostic]\label{rem:diagnostic}
For an exact rule, \eqref{eq:Uisometry} is an identity.  For a numerically optimized candidate, the quantities
\[
\norm{\mathrm{S}^*\mathrm{S}-I}_{2}
\quad\text{and}\quad
\max_{\abs\alpha\le2n}\left|\sum_j\lambda_j\xi_j^\alpha-\int_V\xi^\alpha\,\dd\nu_\gamma\right|
\]
provide natural conditioning and moment-residual diagnostics.  This distinction is important for \Cref{tab:numerical-compression}, whose source data are numerical rather than certified exact formulas.
\end{remark}

\section{Reproducing kernels and hyperinterpolation under the quotient}\label{sec:operator}

Let $K_n^V$ be the reproducing kernel of $\Pi_n(V)$ in $L^2(\nu_\gamma)$, and let $K_m^B$ be the reproducing kernel of $\Pi_m(\B^2)$ in $L^2(\mu_\gamma)$.  Since the disk weight is radial,  { the
even–odd decomposition is }orthogonal.  Define
\begin{equation}\label{eq:even-kernel}
K_{2n}^{B,+}(z,w)=\frac12\bigl(K_{2n}^B(z,w)+K_{2n}^B(z,-w)\bigr).
\end{equation}
{ The following theorem identifies the conic reproducing kernel with the even
component of the lifted disk kernel.}
\begin{theorem}\label{thm:kernel}
For all $z,w\in\B^2$,
\begin{equation}\label{eq:kernel-identity}
K_n^V(Qz,Qw)=K_{2n}^{B,+}(z,w).
\end{equation}
\end{theorem}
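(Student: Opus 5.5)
The plan is to show that both sides of \eqref{eq:kernel-identity} are the reproducing kernel of the same space, namely $\Pi^{\even}_{2n}(\B^2)$ in $L^2(\mu_\gamma)$, and then to invoke uniqueness of reproducing kernels. By \Cref{thm:poly-quotient}, $Q^*$ is a unitary isomorphism from $\PiC_n(V)$ onto the complexified even space $\Pi^{\even}_{2n}(\B^2)$. This extends complex-linearly, since real and imaginary parts are handled separately and evenness is preserved. Consequently, if $\{P_\ell\}_{\ell=1}^{D_n}$ is an orthonormal basis of $\PiC_n(V)$, then $\{P_\ell\circ Q\}$ is an orthonormal basis of the even space. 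Hence
\[
K_n^V(Qz,Qw)=\sum_{\ell}P_\ell(Qz)\overline{P_\ell(Qw)}
\]
is the reproducing kernel of $\Pi^{\even}_{2n}(\B^2)$, viewed as a function of $(z,w)$.

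The second step is to show that $K_{2n}^{B,+}$ is also this reproducing kernel. Since $\mu_\gamma$ is invariant under $z\mapsto-z$, the map $(RF)(z)=F(-z)$ is unitary on $\Pi_{2n}(\B^2)$. The operator $\tfrac12(I+R)$ is therefore the orthogonal projection onto the even subspace; this is the orthogonality of the even–odd decomposition noted before the theorem. I would then check two properties directly. First, for each fixed $w$, the function $z\mapsto K_{2n}^{B,+}(z,w)$ is even in $z$. This follows from $K^B_{2n}(-z,w)=K^B_{2n}(z,-w)$, which holds because, with an orthonormal basis $\{F_k\}$, the family $\{F_k(-\cdot)\}$ is again an orthonormal basis and the kernel is independent of the basis chosen. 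Second, the function lies in $\Pi_{2n}(\B^2)$. For the reproducing property, let $F$ be even. Then
\[
\langle F,K^{B,+}_{2n}(\cdot,w)\rangle=\tfrac12\bigl(F(w)+F(-w)\bigr)=F(w).
\]
Uniqueness of the reproducing kernel of a finite-dimensional space then gives \eqref{eq:kernel-identity} for all $z,w$.

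The main point requiring care is that the identity is claimed pointwise for all $z,w$, including the origin and the rim. This is harmless because both sides are polynomials in $(z,\bar w)$ and the argument above is pointwise, not almost everywhere. Nonetheless, the kernel convention (conjugation in the second slot) must be applied consistently under the complex-linear extension of $Q^*$. I also have to make sure the unitarity of \Cref{thm:poly-quotient} is applied to the complexified spaces. This follows because $Q^*$ commutes with complex conjugation.
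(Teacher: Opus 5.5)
Your proposal is correct and follows the paper's proof: you pull back an orthonormal basis of $\PiC_n(V)$ via \Cref{thm:poly-quotient} to obtain the reproducing kernel of $\Pi^{\even}_{2n}(\B^2)$, and you identify $K_{2n}^{B,+}$ as the kernel of the orthogonal even-subspace projection of the full disk kernel. Your explicit checks (evenness in $z$ via $K_{2n}^B(-z,w)=K_{2n}^B(z,-w)$, the reproducing property for even $F$, and uniqueness of the kernel) simply spell out what the paper asserts in a single clause.
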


\begin{proof}
Choose an orthonormal basis $\{P_j\}_{j=1}^{(n+1)^2}$ of $\Pi_n(V)$.  By \Cref{thm:poly-quotient}, $\{P_j\circ Q\}$ is an orthonormal basis of $\Pi_{2n}^{\even}(\B^2)$.  Summing the basis expansion of the reproducing kernel gives \eqref{eq:kernel-identity}; \eqref{eq:even-kernel} is the orthogonal { even-subspace} projection of the full disk kernel.
\end{proof}

Let a positive cone cubature rule of degree $2n$ be given and lift it by \Cref{thm:cubature-equivalence}.  Define conic hyperinterpolation by
\begin{equation}\label{eq:hyperinterp}
L_n^Vf(\xi)=\sum_{j=1}^N\lambda_jf(\xi_j)K_n^V(\xi,\xi_j).
\end{equation}
{ Degree-\(2n\) exactness and the reproducing property imply \(L_n^V p = p\) for every \(p \in \Pi_n(V)\); thus \(L_n^V\) is a projection onto \(\Pi_n(V)\).}
Let $H_{2n}^B$ denote the ordinary degree-$2n$ disk hyperinterpolation generated by the lifted disk rule. {  If the lifted nodes and weights are $(z_i,w_i)$, then
\[
 H_{2n}^BF(z)=\sum_iw_iF(z_i)K_{2n}^B(z,z_i).
\]}

{ Conic hyperinterpolation therefore agrees with disk hyperinterpolation restricted
to even data.}
\begin{theorem}\label{thm:operator-transfer}
If $F=f\circ Q$, then
\begin{equation}\label{eq:operator-transfer}
(L_n^Vf)\circ Q=H_{2n}^BF.
\end{equation}
The right-hand side is even.  Consequently,
\begin{equation}\label{eq:norm-transfer}
\norm{L_n^V}_{C(V)\to C(V)}
={ \norm{H_{2n}^B|_{C_{\even}(\B^2)}}_{C_{\even}(\B^2)\to C(\B^2)}}
\le\norm{H_{2n}^B}_{C(\B^2)\to C(\B^2)}.
\end{equation}
\end{theorem}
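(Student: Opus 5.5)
The plan is to verify \eqref{eq:operator-transfer} pointwise and then read off the norm identity from \Cref{cor:even-isometry}. Let $(\xi_j,\lambda_j)$ be the cone rule and $(z_i,w_i)$ its lift from \Cref{thm:cubature-equivalence}(i): each non-apex $\xi_j$ gives the pair $\pm z_j$ with weight $\lambda_j/2$, and the apex, if present, gives the origin with weight $\lambda_j$. Put $F=f\circ Q$. Evenness gives $F(\pm z_j)=f(\xi_j)$. Therefore the lifted disk sum can be grouped by fibers:
\[
H_{2n}^BF(z)=\sum_{j}\lambda_j f(\xi_j)\,\tfrac12\bigl(K_{2n}^B(z,z_j)+K_{2n}^B(z,-z_j)\bigr)=\sum_j\lambda_jf(\xi_j)K_{2n}^{B,+}(z,z_j).
\]
For the apex term the two halves coincide, because $z_j=0=-z_j$, so the same formula applies with full weight. \Cref{thm:kernel} then identifies $K_{2n}^{B,+}(z,z_j)$ with $K_n^V(Qz,\xi_j)$, since $Qz_j=\xi_j$. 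Hence the sum equals $(L_n^Vf)(Qz)$, which is \eqref{eq:operator-transfer}. Evenness of the right-hand side follows directly, because the left-hand side is a function of $Qz$ and $Q(-z)=Q(z)$.

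For the norm statement, I will use that $Q^*:C(V)\to C_{\even}(\B^2)$ is a surjective isometry for the uniform norm (\Cref{cor:even-isometry}), and that it also preserves sup norms into $C(\B^2)$, because $Q$ maps onto $V$. Then
\[
\norm{L_n^Vf}_{C(V)}=\norm{(L_n^Vf)\circ Q}_{C(\B^2)}=\norm{H_{2n}^BF}_{C(\B^2)}
\qquad\text{and}\qquad \norm{f}_{C(V)}=\norm F_{C(\B^2)}.
\]
As $f$ ranges over $C(V)$, $F$ ranges over all of $C_{\even}(\B^2)$. Taking suprema over the unit ball therefore gives the equality in \eqref{eq:norm-transfer}. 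The inequality follows because $C_{\even}(\B^2)$ is a subspace of $C(\B^2)$, so the operator norm of the restriction cannot exceed the full operator norm.

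The main point needing care is the fiber-grouping step, in particular the apex bookkeeping: the weight there is not split, but the even-kernel average is still correct because $-0=0$. The rest is a direct consequence of \Cref{thm:kernel} and \Cref{cor:even-isometry}. Lifting \emph{continuous} functions causes no difficulty, since that is exactly the quotient-map statement in \Cref{cor:even-isometry}.
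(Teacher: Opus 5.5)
Your proof is correct and follows essentially the paper's argument. You group the lifted pairs $\pm z_j$ (handling the apex via $-0=0$) to produce the even kernel $K_{2n}^{B,+}$, which \Cref{thm:kernel} identifies with $K_n^V(Qz,\xi_j)$, and you then read off the norm identity from the uniform isometry $C(V)\cong C_{\even}(\B^2)$ of \Cref{cor:even-isometry}. The paper phrases the pairing step as pairwise cancellation of the discrete coefficients of odd disk basis functions, which is just the basis-level form of your kernel averaging.
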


\begin{proof}
The lifted pairs carry equal weights.  For even data $F$, every odd disk basis function has zero discrete coefficient by pairwise cancellation, so the full disk hyperinterpolant equals its even part.  Combining \Cref{thm:kernel} with pairwise merging of $\pm z_j$ gives \eqref{eq:operator-transfer}.  The norm identity follows from the isometry $C(V)\cong C_{\even}(\B^2)$ in \Cref{cor:even-isometry}.
\end{proof}

\section{Optimal $L^\infty$ stability: a rule-independent linear law}\label{sec:Lebesgue}

For a cone rule of degree $2n$, define
\begin{equation}\label{eq:Lebesgue-function}
\lambda_n^V(\xi)=\sum_j\lambda_j\abs{K_n^V(\xi,\xi_j)},
\qquad
\Lambda_n^V=\sup_{\xi\in V}\lambda_n^V(\xi).
\end{equation}
For distinct nodes, $\Lambda_n^V$ is exactly the $C(V)\to C(V)$ operator norm: at a fixed evaluation point, signs (or complex phases) of the nodal data may be prescribed independently and extended continuously to $V$.

By Cauchy--Schwarz and degree-$2n$ exactness,
\[
\lambda_n^V(\xi)\le\sqrt{K_n^V(\xi,\xi)}.
\]
Together with the conic Christoffel bounds from the localized-kernel theory~\cite{XuFrame2021}, this yields only
\[
\Lambda_n^V\le C_\gamma n^{\gamma+3/2},\qquad \gamma\ge 0.
\]
At $\gamma=0$ this is $O(n^{3/2})$.  The quotient removes the artificial square-root loss for every positive rule; a separate argument then gives a matching lower bound.

{ The precise external estimate needed for the rule-independent upper bound is recorded first.

\begin{theorem}(\text{Wade}  \cite[Theorem 3.1]{Wade2013})\label{thm:6.1}
Let $\mu_B = q/2$ with $q\in\mathbb{N}$, and let $H_m^B$ be degree-$m$ hyperinterpolation on $\mathbb{B}^2$ for the normalized weight $(1-|z|^2)^{\mu_B-1/2}$. If its cubature formula has nodes in $\mathbb{B}^2$, positive weights and is exact through degree $2m$, then
\[
\|H_m^B\|_{C(\mathbb{B}^2)\to C(\mathbb{B}^2)} \leq C_{\mu_B} m^{\mu_B+1/2}.
\]
The constant depends only on $\mu_B$ and is independent of the number, placement, and particular choice of cubature nodes.
\end{theorem}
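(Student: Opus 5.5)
Since this is an external result, I would prove it by lifting the disk problem to a sphere, where rule-independent Lebesgue bounds for hyperinterpolation are classical (Sloan--Womersley, Reimer~\cite{SloanWomersley2000,Reimer2000}).

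\emph{Lifting to the sphere.} The key observation is that the weight is a projected surface measure. Orthogonal projection of the normalized surface measure on $S^{d+k-1}\subset\R^{d+k}$ onto the first $d$ coordinates gives the weight $(1-|x|^2)^{(k-2)/2}$ on $\B^d$. With $d=2$ and $\mu_B=q/2$, the disk weight $(1-|z|^2)^{(q-1)/2}$ therefore comes from $S^{q+2}$, taking $k=q+1$. This is where the hypothesis $q\in\mathbb N$ enters.

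Under this projection, $F\mapsto F\circ\pi$ identifies $\Pi_m(\B^2)$ isometrically with the polynomials of degree $\le m$ on $S^{q+2}$ that depend only on the first two coordinates. This is the space invariant under the orthogonal group $O(q+1)$ acting on the remaining coordinates. Averaging over that group, the disk kernel $K_m^B(z,w)$ becomes an $O(q+1)$-average of the spherical reproducing kernel $K_m^{S}(\langle x,y\rangle)$.

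\emph{Lifting the rule.} I would combine the given disk rule with a positive rule on the fibres $\pi^{-1}(z)\cong\sqrt{1-|z|^2}\,S^{q}$ that is exact to degree $2m$. This produces a positive rule on $S^{q+2}$ exact to degree $2m$, and it reduces to the original disk rule on functions of the form $F\circ\pi$. A degenerate fibre over a boundary node is a single point, which causes no difficulty.

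\emph{Transferring the Lebesgue function.} The lifted spherical Lebesgue function, evaluated at a point over $z$, dominates the disk Lebesgue function $\sum_i w_i\abs{K_m^B(z,z_i)}$. This follows by the triangle inequality applied to the fibre average. It therefore suffices to prove
\[
\sup_x\sum_j\Lambda_j\abs{K_m^S(\langle x,y_j\rangle)}\le C\,m^{(q+1)/2}
\]
for every positive spherical rule on $S^{q+2}$ exact to degree $2m$. Since $(q+1)/2=\mu_B+1/2$, this is exactly the claimed order.

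\emph{The spherical estimate.} This is the main obstacle, because no separation or distribution assumption on the nodes is available. I would take two inputs.
\begin{itemize}[label=\textendash]
\item \emph{Regularity of positive exact rules.} Positivity plus exactness to degree $2m$ forces the weight in any cap of radius $\theta\ge c/m$ to be at most a constant times the measure of the enlarged cap. I would prove this by testing the rule against a nonnegative polynomial majorant of degree $\le 2m$ for the cap indicator, built from a squared Fej\'er/Jackson-type kernel.
\item \emph{Pointwise kernel decay.} From the Christoffel--Darboux/Jacobi form of the Gegenbauer reproducing kernel on $S^{d}$ with $d=q+2$,
\[
\abs{K_m^S(\cos\theta)}\lesssim \min\bigl(m^{d},\,m^{(d-1)/2}\theta^{-(d+1)/2}\bigr),
\]
away from the antipodal point, which is handled symmetrically.
\end{itemize}
I would then decompose $S^{q+2}$ dyadically around $x$ into the cap $\theta\le 1/m$ and the annuli $2^k/m<\theta\le2^{k+1}/m$. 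On each piece I bound the weight mass by regularity and multiply by the sup of the kernel there. This gives
\[
\sum_k (2^k/m)^{d}\,m^{(d-1)/2}(2^k/m)^{-(d+1)/2}
= m^{(d-1)/2}\, m^{-(d-1)/2}\sum_k 2^{k(d-1)/2}.
\]
The dyadic sum runs up to $2^k\sim m$, so it is of order $m^{(d-1)/2}$, and the total is $O(m^{(d-1)/2})=O(m^{(q+1)/2})$. The constants depend only on $d$, hence only on $\mu_B$, which gives independence from the nodes.
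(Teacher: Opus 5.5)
Your proposal is correct and follows the route the paper attributes to Wade. You tensor the disk rule with a positive degree-$2m$ rule on the fibre sphere $S^q$ to get a positive degree-$2m$ rule on $S^{q+2}$, dominate the disk Lebesgue function by the spherical one through the fibre-averaged kernel identity, and then apply a rule-independent $O(m^{(q+1)/2})$ spherical bound. The only difference is that you sketch Reimer's spherical estimate (cap regularity of positive exact rules, Jacobi-type kernel decay and a dyadic sum) rather than citing it, and that sketch is sound.
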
\noindent\textit{Verification from Wade's theorem.}
 This is the upper estimate in \cite[Theorem 3.1]{Wade2013}, specialized to dimension two. The condition $\mu_B = q/2$ with a positive integer $q$ is Wade's stated half-integer hypothesis; the displayed convention for $\mathbb{N}$ only makes its lower endpoint explicit. In Wade's proof, $\mu_B = q/2$ permits the ball rule to be tensored with an arbitrary positive degree-$2m$ rule on the fiber sphere; Lemma 3.5 of that paper then gives a positive degree-$2m$ spherical rule, to which Reimer's spherical estimate applies. Consequently no separation, node-density, or additional Marcinkiewicz–Zygmund hypothesis is imposed on the ball rule, and the constant is uniform over all positive rules of the stated exactness. A common normalization of the continuous measure and cubature weights leaves the hyperinterpolation operator unchanged.
\qed

Wang, Huang, Li and Wei \cite{WangEtAl2015} study a broader Gegenbauer parameter range, but no additional arbitrary-rule extension is inferred here; the statement above is exactly the range needed and directly verified from Wade's result.
}

\begin{theorem}[Rule-independent linear upper bound]\label{thm:upper}
{ Let $n\ge 1$ and $\gamma\in \tfrac12\mathbb{Z}_{\ge0}$. If $L_{n,\gamma}^V$ is generated by any positive degree-$2n$ cone cubature rule for $\nu_\gamma$, then
\begin{equation}\label{eq:26}
\Lambda_{n,\gamma}^V \leq C_\gamma n^{\gamma+1},
\end{equation}
where $C_\gamma$ is independent of $n$ and of the particular rule. In particular, when $\gamma=0$,
\begin{equation}\label{eq:27}
\Lambda_n^V \leq Cn.
\end{equation}}
\end{theorem}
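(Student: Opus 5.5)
The plan is to reduce the conic Lebesgue constant to a disk one through \Cref{thm:operator-transfer} and then quote \Cref{thm:6.1}.

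Let a positive cone rule exact on $\Pi_{2n}(V)$ be given. By \Cref{thm:cubature-equivalence}(i), lift it to a centrally symmetric positive disk rule. This rule has nodes in $\B^2$ and is exact on $\Pi_{4n+1}(\B^2)$, and therefore on $\Pi_{4n}(\B^2)=\Pi_{2m}(\B^2)$ with $m=2n$. Positivity of the lifted weights is immediate, since each weight is $\lambda_j/2$ or $\lambda_j$. Hence the lifted rule is admissible for degree-$m$ hyperinterpolation $H_{2n}^B$ with respect to $\mu_\gamma$. \Cref{thm:operator-transfer} then gives
\[
\Lambda_{n,\gamma}^V=\norm{L_n^V}_{C(V)\to C(V)}\le\norm{H_{2n}^B}_{C(\B^2)\to C(\B^2)}.
\]
The first equality uses the remark after \eqref{eq:Lebesgue-function} that $\Lambda$ is the operator norm.

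Next, match parameters with Wade's theorem. The disk weight $(1-\abs z^2)^\gamma$ equals $(1-\abs z^2)^{\mu_B-1/2}$ with $\mu_B=\gamma+\tfrac12$. Wade's theorem requires $\mu_B=q/2$ with $q\in\mathbb N$, so we need $\gamma=(q-1)/2$. This holds exactly when $\gamma\in\tfrac12\mathbb Z_{\ge0}$, with $q=2\gamma+1\ge1$. The normalization of the probability measure $\mu_\gamma$ does not affect the operator, as noted in the verification of \Cref{thm:6.1}.

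Applying \Cref{thm:6.1} with $m=2n$ gives
\[
\norm{H_{2n}^B}\le C_{\mu_B}(2n)^{\mu_B+1/2}=C_{\mu_B}(2n)^{\gamma+1}.
\]
Setting $C_\gamma:=2^{\gamma+1}C_{\gamma+1/2}$ yields \eqref{eq:26}. The constant is independent of the rule because Wade's constant is uniform over all positive rules of the stated exactness, and every cone rule lifts to such a rule. The case $\gamma=0$ corresponds to $\mu_B=1/2$, i.e.\ $q=1$, and gives \eqref{eq:27}.

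The only delicate point is checking the hypotheses of \Cref{thm:6.1}. It needs exactness through degree $2m=4n$ (we have $4n+1$), nodes in the closed disk (the preimages $z_j$ of $\xi_j\in V$ lie in $\B^2$), and strictly positive weights. The argument does not use central symmetry beyond the norm inequality in \Cref{thm:operator-transfer}, so no further work is needed.
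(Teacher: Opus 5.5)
Your proposal is correct and follows the paper's proof essentially step for step. You lift the cone rule to a centrally symmetric positive disk rule with nodes in $\B^2$ that is exact through degree $4n+1$. You apply Wade's estimate (\Cref{thm:6.1}) with $m=2n$ and $\mu_B=\gamma+\tfrac12=q/2$, note that the probability normalization leaves the operator unchanged, and conclude via the norm inequality \eqref{eq:norm-transfer}.
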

\begin{proof}
Lift the rule to a centrally symmetric positive disk cubature formula exact through degree $4n+1$. { 
Every lifted node lies in $\mathbb{B}^2$, because a preimage of a {cone node at height $t$ satisfies $|z|^2 = t \le 1$.  Construct disk hyperinterpolation of degree $m=2n$. The lifted rule is exact through degree $4n+1$, so it satisfies the degree-$2m=4n$ hypothesis of Theorem \ref{thm:6.1}. Moreover, $\mu_B = \gamma+\tfrac12 = q/2$, where $q=2\gamma+1\in\mathbb{N}$. Therefore
\[
\|H_{2n}^B\| \leq C_\gamma (2n)^{\mu_B+1/2} \leq \widetilde{C}_\gamma n^{\gamma+1}.
\]
The constant is independent of the lifted rule and hence of the original cone rule.}

Our $\mu_\gamma$ is a probability normalization of the standard ball weight. Multiplying the continuous measure and all cubature weights by the same constant multiplies the reproducing kernel by the reciprocal constant, leaving the hyperinterpolation operator unchanged. Finally, estimate \eqref{eq:norm-transfer} completes the proof.}
\end{proof}

\subsection{A boundary kernel}

Let
\[
\xi_*=(1,0,1),
\qquad
\xi(t,\phi)=(t\cos\phi,t\sin\phi,t).
\]
For $0\le t\le1$, put $\rho=\sqrt t$ and choose $\alpha\in[0,\pi]$ by
\begin{equation}\label{eq:alpha}
\cos\alpha=\rho\cos(\phi/2).
\end{equation}
Let $U_m$ denote the Chebyshev polynomial of the second kind.
{ The following lemma gives the boundary-point kernel in a closed Chebyshev form.}
\begin{lemma} \label{lem:boundary-kernel}
For $\gamma=0$,
\begin{equation}\label{eq:Fn-sum}
K_n^V(\xi_*,\xi(t,\phi))=F_n(\cos\alpha),
\qquad
F_n(s)=\sum_{j=0}^n(2j+1)U_{2j}(s).
\end{equation}
{ The polynomial $F_n$ is even, so the expression is unchanged when
$\phi\mapsto\phi+2\pi$ reverses the sign of $\cos(\phi/2)$.}
If $\sin\alpha\ne0$, then with $N=n+1$,
\begin{equation}\label{eq:Fn-closed}
F_n(\cos\alpha)
=-N\frac{\cos(2N\alpha)}{\sin^2\alpha}
+\frac{\cos\alpha\sin(2N\alpha)}{2\sin^3\alpha}.
\end{equation}
\end{lemma}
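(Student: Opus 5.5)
The plan is to reduce the cone kernel to a disk kernel using \Cref{thm:kernel}, evaluate that disk kernel at a boundary point where it collapses to a ridge function, and then sum the resulting Chebyshev series in closed form.

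First I lift both points. Note $\xi_*=Q(e_1)$ with $e_1=(1,0)\in\partial\B^2$. Also $\xi(t,\phi)=Q(w)$ with $w=\rho(\cos(\phi/2),\sin(\phi/2))$ and $\rho=\sqrt t$. Then \Cref{thm:kernel} gives
\[
K_n^V(\xi_*,\xi(t,\phi))=\tfrac12\bigl(K_{2n}^B(e_1,w)+K_{2n}^B(e_1,-w)\bigr).
\]
For $\gamma=0$ the measure $\mu_0$ is normalized Lebesgue measure on the disk. Write $K_{2n}^B=\sum_{k=0}^{2n}P_k$, where $P_k$ is the reproducing kernel of the orthogonal polynomials of exact degree $k$.

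The key input is the boundary formula
\[
P_k(e_1,w)=(k+1)\,U_k(\langle e_1,w\rangle)=(k+1)\,U_k(w_1).
\]
I would obtain it from Xu's compact formula for the disk projection kernel with parameter $\mu=\tfrac12$. That formula writes $P_k(x,y)$ as a normalized integral of $C_k^{1}=U_k$ evaluated at $\langle x,y\rangle+\sqrt{1-|x|^2}\sqrt{1-|y|^2}\,s$. At $|x|=1$ the square-root term vanishes, so the integral collapses to a constant multiple of $U_k(\langle x,y\rangle)$. The constant is fixed by a sanity check. For $k=0$ the kernel is $1$. For $k=1$, the functions $2u,2v$ are orthonormal because $\int u^2\dd\mu_0=1/4$; this gives $P_1(e_1,w)=4w_1=2U_1(w_1)$.

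As an independent check, or an alternative proof, I would verify directly that $w\mapsto(k+1)U_k(w_1)$ reproduces degree-$k$ orthogonal polynomials at $e_1$. This uses the Logan--Shepp orthogonality of the ridge functions $U_k(\langle\zeta,\cdot\rangle)$ for $|\zeta|=1$, together with rotation invariance. I expect pinning down this boundary kernel and its normalization to be the main obstacle; everything after it is bookkeeping.

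Since $U_k$ has parity $(-1)^k$, averaging over $\pm w$ kills the odd $k$. This leaves
\[
K_n^V(\xi_*,\xi(t,\phi))=\sum_{j=0}^n(2j+1)U_{2j}(\rho\cos(\phi/2))=F_n(\cos\alpha),
\]
which is \eqref{eq:Fn-sum}. The function $F_n$ is even because each $U_{2j}$ is even, so replacing $\phi$ by $\phi+2\pi$ does not change the value. This also shows the formula does not depend on which preimage of $\xi(t,\phi)$ was chosen.

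For the closed form \eqref{eq:Fn-closed}, I use $U_{2j}(\cos\alpha)=\sin((2j+1)\alpha)/\sin\alpha$. Set
\[
S(\alpha)=\sum_{j=0}^n\cos((2j+1)\alpha)=\frac{\sin(2N\alpha)}{2\sin\alpha},\qquad N=n+1.
\]
Then $\sum_j(2j+1)\sin((2j+1)\alpha)=-S'(\alpha)$. Differentiating the quotient gives
\[
-S'(\alpha)=-\frac{2N\cos(2N\alpha)\sin\alpha-\sin(2N\alpha)\cos\alpha}{2\sin^2\alpha}.
\]
Dividing by $\sin\alpha$ yields exactly \eqref{eq:Fn-closed}, valid whenever $\sin\alpha\ne0$.
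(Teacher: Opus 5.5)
Your proposal is correct and follows essentially the same route as the paper: you lift both points via \Cref{thm:kernel}, use the boundary disk kernel $(k+1)U_k(\langle e_1,w\rangle)$ from Xu's compact formula with $d=2$, $\mu=1/2$, discard the odd layers by parity, and differentiate the telescoped sum $\sum_{j=0}^n\cos((2j+1)\alpha)=\sin(2N\alpha)/(2\sin\alpha)$. One small remark: the factor $k+1$ for general $k$ comes from the explicit prefactor $(k+\lambda)/\lambda$ with $\lambda=1$ in that formula (or from your Logan--Shepp verification), whereas your checks at $k=0,1$ only confirm it and cannot fix the constant for all $k$.
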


\begin{proof}
Take $z_*=(1,0)$ and the preimage
\[
w=(\sqrt t\cos(\phi/2),\sqrt t\sin(\phi/2))
\]
of $\xi(t,\phi)$.  For the normalized Lebesgue measure on $\B^2$, the reproducing kernel of the exact-degree orthogonal-polynomial space at a boundary point is
\[
\mathcal P_m^B(z_*,w)=(m+1)U_m(z_*\cdot w);
\]
this is the $d=2$, $\mu=1/2$ specialization of the standard ball-kernel formula; see~\cite{XuKernel2001}.  Central symmetry implies that the exact-degree space has parity $(-1)^m$.  Thus \Cref{thm:kernel} keeps precisely the even-degree layers and gives \eqref{eq:Fn-sum}.

Now use $U_{2j}(\cos\alpha)=\sin((2j+1)\alpha)/\sin\alpha$.  Differentiating
\[
\sum_{j=0}^n\cos((2j+1)\alpha)=\frac{\sin(2N\alpha)}{2\sin\alpha}
\]
gives
\[
\sum_{j=0}^n(2j+1)\sin((2j+1)\alpha)
=-N\frac{\cos(2N\alpha)}{\sin\alpha}
+\frac{\cos\alpha\sin(2N\alpha)}{2\sin^2\alpha}.
\]
Division by $\sin\alpha$ yields \eqref{eq:Fn-closed}.
\end{proof}

{ The closed kernel formula yields the following uniform pointwise and angular $L^2$
estimates on interior radial intervals.}
\begin{lemma}\label{lem:bulk-bounds}
Fix $0<a<b<1$.  There exist $c_{a,b},C_{a,b}>0$ and $n_0$ such that for all $n\ge n_0$ and all $t\in[a,b]$,
\begin{align}
\max_{0\le\phi<2\pi}\abs{K_n^V(\xi_*,\xi(t,\phi))}&\le C_{a,b}n,\label{eq:bulk-Linf}\\
\frac1{2\pi}\int_0^{2\pi}\abs{K_n^V(\xi_*,\xi(t,\phi))}^2\,\dd\phi&\ge c_{a,b}n^2.\label{eq:bulk-L2}
\end{align}
\end{lemma}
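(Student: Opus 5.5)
The plan is to work entirely with the closed form \eqref{eq:Fn-closed} from \Cref{lem:boundary-kernel}, with $N=n+1$ and $\cos\alpha=\rho\cos(\phi/2)$, $\rho=\sqrt t\in[\sqrt a,\sqrt b]$. Since $\rho\le\sqrt b<1$, we have $\abs{\cos\alpha}\le\sqrt b$, so $\sin\alpha\ge\sqrt{1-b}>0$ uniformly in $t\in[a,b]$ and $\phi$. This keeps us away from the singular endpoints of the Chebyshev representation, and everything below is a uniform estimate on a compact $\alpha$-range $\alpha\in[\alpha_0,\pi-\alpha_0]$ with $\cos\alpha_0=\sqrt b$.

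For \eqref{eq:bulk-Linf}, I will bound the two terms of \eqref{eq:Fn-closed} directly. The first is at most $N/(1-b)$ and the second at most $1/(2(1-b)^{3/2})$. Hence
\[
\abs{K_n^V(\xi_*,\xi(t,\phi))}\le C_{a,b}\,n
\]
for all $n\ge1$, and no condition on $n_0$ is needed here.

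For \eqref{eq:bulk-L2}, I will write
\[
F_n(\cos\alpha)=-N\frac{\cos(2N\alpha)}{\sin^2\alpha}+R_n(\alpha),\qquad \abs{R_n}\le C_b .
\]
Then
\[
\abs{F_n}^2\ge N^2\frac{\cos^2(2N\alpha)}{\sin^4\alpha}-2NC_b(1-b)^{-1},
\]
so it suffices to show that the angular mean of $\cos^2(2N\alpha)/\sin^4\alpha$ is bounded below by a positive constant uniformly in $t\in[a,b]$, for $n$ large. Since $\sin^{-4}\alpha\ge1$, it is enough to bound below
\[
\frac1{2\pi}\int_0^{2\pi}\cos^2(2N\alpha(\phi))\,\dd\phi=\frac12+\frac1{4\pi}\int_0^{2\pi}\cos(4N\alpha(\phi))\,\dd\phi .
\]
I expect this to be the main step: showing that the oscillatory integral $\int\cos(4N\alpha(\phi))\dd\phi$ is $o(1)$ uniformly in $\rho\in[\sqrt a,\sqrt b]$.

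To handle it, I substitute $\psi=\phi/2\in[0,\pi]$, so that $\alpha(\psi)=\arccos(\rho\cos\psi)$ is smooth and strictly increasing. Its derivative is
\[
\alpha'(\psi)=\frac{\rho\sin\psi}{\sqrt{1-\rho^2\cos^2\psi}} .
\]
This derivative vanishes only at the endpoints $\psi=0,\pi$, where the points are nondegenerate critical points: $\alpha''(0)=\rho/\sqrt{1-\rho^2}\ge\sqrt a$ and symmetrically at $\pi$. The bounds are uniform in $\rho$, as are those on higher derivatives. Van der Corput's lemma (second-derivative version near the endpoints, first-derivative version with monotone $1/\alpha'$ on the middle range) then gives $\abs{\int_0^\pi\cos(4N\alpha(\psi))\dd\psi}\le C_{a,b}N^{-1/2}$.

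Alternatively, I can change variables to $\alpha$ itself. The density $\dd\psi/\dd\alpha=\sin\alpha/\sqrt{\rho^2-\cos^2\alpha}$ has integrable inverse-square-root endpoint singularities, and a Riemann–Lebesgue argument that is uniform over the compact family of $\rho$ applies.

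Either way, the mean of $\cos^2(2N\alpha)$ is at least $\tfrac12-C N^{-1/2}\ge\tfrac14$ for $n\ge n_0$. Therefore
\[
\frac1{2\pi}\int_0^{2\pi}\abs{K_n^V}^2\,\dd\phi\ge \tfrac14N^2-C'N\ge c_{a,b}\,n^2
\]
after enlarging $n_0$ if necessary.
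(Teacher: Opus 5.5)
Your proof is correct. The upper bound \eqref{eq:bulk-Linf} matches the paper's. So does the reduction of \eqref{eq:bulk-L2} to a lower bound for the mean of $\cos^2(2N\alpha)$, via $\sin^{-4}\alpha\ge1$ and the cross-term estimate $2N\abs{X}\abs{R_n}$.

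The difference lies in the oscillatory step. The paper does not integrate over the full circle: since only a lower bound is needed, it discards everything outside $I=[\pi/2,3\pi/2]$. There $\alpha'(\phi)=\sqrt t\sin(\phi/2)/(2\sin\alpha)\ge\sqrt{a/8}$ and $\alpha''$ is bounded, so a single integration by parts gives $\bigl|\int_I e^{i4N\alpha}\,\dd\phi\bigr|\le C_{a,b}/N$. Hence $\int_I\cos^2(2N\alpha)\,\dd\phi=\abs{I}/2+O(N^{-1})$, with no stationary points at all.

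You keep the whole range $\psi=\phi/2\in[0,\pi]$, so you must handle the nondegenerate stationary points of $\alpha$ at $\psi=0,\pi$. This requires van der Corput's second-derivative lemma plus a uniform-in-$\rho$ neighbourhood where $\abs{\alpha''}$ stays bounded below. It gives the weaker rate $O(N^{-1/2})$. In exchange you get the full mean $\tfrac12$ instead of $\abs{I}/(4\pi)=\tfrac14$, a constant-factor gain that is irrelevant here. The paper's route is therefore the more elementary one; yours shows that the stationary points do not obstruct the oscillation on the full circle.

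One small repair is needed. On the middle range $\alpha'$ is not monotone, since $\alpha'(\psi)=\alpha'(\pi-\psi)$: it increases on $[0,\pi/2]$ and decreases on $[\pi/2,\pi]$. So either apply the first-derivative test separately on each side of $\pi/2$, or integrate by parts using the bound on $\alpha''$, as the paper does.
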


\begin{proof}
For $t\in[a,b]$, \eqref{eq:alpha} gives
$\sin\alpha\ge\sqrt{1-b}>0$, so \eqref{eq:Fn-closed} yields \eqref{eq:bulk-Linf}.

For the lower bound, restrict the angular integral to $I=[\pi/2,3\pi/2]$.  Differentiating \eqref{eq:alpha},
\[
\alpha'(\phi)=\frac{\sqrt t\sin(\phi/2)}{2\sin\alpha(\phi)}\ge\sqrt{a/8}>0
\]
on $[a,b]\times I$, while $\alpha''$ is uniformly bounded there.  Integration by parts gives
\[
\int_I e^{i4N\alpha(\phi)}\,\dd\phi
=\left[\frac{e^{i4N\alpha(\phi)}}{i4N\alpha'(\phi)}\right]_{\partial I}
+\frac1{i4N}\int_I e^{i4N\alpha(\phi)}
\frac{\alpha''(\phi)}{(\alpha'(\phi))^2}\,\dd\phi,
\]
so uniformly in $t\in[a,b]$,
\[
\left|\int_Ie^{i4N\alpha(\phi)}\,\dd\phi\right|\le\frac{C_{a,b}}N.
\]
Hence
\[
\int_I\cos^2(2N\alpha(\phi))\,\dd\phi
=\frac{|I|}{2}+O(N^{-1})\ge c_0>0
\]
for all large $N$.

Write \eqref{eq:Fn-closed} as
\[
F_n(\cos\alpha)=-NA(\phi)\cos(2N\alpha)+B(\phi)\sin(2N\alpha),
\]
where $A=\sin^{-2}\alpha\ge1$ and $B=\cos\alpha/(2\sin^3\alpha)$.  On $[a,b]\times I$, both are uniformly bounded.  Dropping the nonnegative $B^2\sin^2$ term and estimating the cross term gives
\begin{align*}
\int_I\abs{F_n(\cos\alpha(\phi))}^2\,\dd\phi
&\ge N^2\int_I\cos^2(2N\alpha(\phi))\,\dd\phi
      -2N\norm{AB}_{L^\infty(I)}|I|\\
&\ge c_1N^2-C_1N\ge c_2N^2
\end{align*}
for $N$ large.  Since $N=n+1$, \eqref{eq:bulk-L2} follows.
\end{proof}

\subsection{The product rule: pointwise lower bound}
{ This subsection
records a transparent tensor-product proof as a warm-up. Its final conclusion is
subsumed by the rule-independent argument in Subsection \ref{subsection6.3}, but exact angular
sampling makes the mechanism especially explicit.}

For the product rule at $\gamma=0$, let $t_k,A_k$ be the $(n+1)$-point Gauss--Legendre nodes and weights on $[0,1]$, normalized by $\sum_kA_k=1$, and let
\[
L=2n+1,
\qquad
\phi_j=\frac{2\pi j}{L},\quad0\le j<L.
\]
The cone weights are $A_k/L$.

{ Exact angular sampling gives a uniform discrete $\ell^1$ lower bound.  }
\begin{lemma} \label{lem:angular-l1}
Fix $0<a<b<1$.  There exist $c>0$ and $n_0$ such that for every $n\ge n_0$ and $t\in[a,b]$,
\begin{equation}\label{eq:angular-l1}
\frac1L\sum_{j=0}^{L-1}\abs{K_n^V(\xi_*,\xi(t,\phi_j))}\ge cn.
\end{equation}
\end{lemma}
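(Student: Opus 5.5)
The plan is to obtain the discrete $\ell^1$ bound from a discrete $\ell^2$ lower bound and an $\ell^\infty$ upper bound, via the elementary inequality
\[
\frac1L\sum_{j}\abs{g_j}\;\ge\;\frac{\frac1L\sum_j\abs{g_j}^2}{\max_j\abs{g_j}},
\qquad g_j=K_n^V(\xi_*,\xi(t,\phi_j)).
\]
The denominator is handled directly by \eqref{eq:bulk-Linf} of \Cref{lem:bulk-bounds}: for $t\in[a,b]$ and $n\ge n_0$, $\max_j\abs{g_j}\le C_{a,b}n$.

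For the numerator, I will use exact angular sampling. Fix $t\in[a,b]$ and set $g(\phi)=K_n^V(\xi_*,\xi(t,\phi))$. Since $K_n^V(\xi_*,\cdot)\in\PiC_n(V)$, the Fourier--radial description \eqref{eq:fourier-radial} shows that $g$ is a trigonometric polynomial in $\phi$ with frequencies $\abs k\le n$. Consequently $\abs{g}^2=g\overline g$ has frequencies $\abs k\le 2n$. The equispaced rule with $L=2n+1$ points integrates $e^{ik\phi}$ exactly for $\abs k\le L-1=2n$, because for $0<\abs k<L$ both the discrete and the continuous averages vanish. This is the same computation as in \Cref{prop:product}. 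Hence
\[
\frac1L\sum_{j=0}^{L-1}\abs{g(\phi_j)}^2=\frac1{2\pi}\int_0^{2\pi}\abs{g(\phi)}^2\,\dd\phi\ge c_{a,b}n^2
\]
by \eqref{eq:bulk-L2}.

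Combining the two bounds gives $\frac1L\sum_j\abs{g_j}\ge c_{a,b}n^2/(C_{a,b}n)=cn$, uniformly in $t\in[a,b]$, for $n\ge n_0$. This is \eqref{eq:angular-l1}.

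All of the analytic difficulty, namely the stationary-phase-free lower bound on the angular $L^2$ mass, is already contained in \Cref{lem:bulk-bounds}. The only point requiring care here is the frequency bookkeeping: $\abs g^2$ must have degree $2n$ strictly below $L=2n+1$. This holds precisely because the product rule with $s=2n$ uses $L_s=s+1$ angular nodes.
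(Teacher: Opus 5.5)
Your proposal is correct and follows the paper's proof essentially verbatim: exact integration of the degree-$2n$ trigonometric polynomial $\abs{g}^2$ by the $L=2n+1$ equispaced angular rule, the bounds \eqref{eq:bulk-L2} and \eqref{eq:bulk-Linf} from \Cref{lem:bulk-bounds}, and the inequality $\ell^1\ge\ell^2/\ell^\infty$. Your frequency bookkeeping via \eqref{eq:fourier-radial} matches the paper's justification that $g$ has angular degree at most $n$.
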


\begin{proof}
For fixed $t$, $h_t(\phi)=K_n^V(\xi_*,\xi(t,\phi))$ is a trigonometric polynomial of degree at most $n$.  Hence $|h_t|^2$ has degree at most $2n$, and the $L=2n+1$ equally spaced angular rule is exact:
\[
\frac1L\sum_{j=0}^{L-1}|h_t(\phi_j)|^2
=\frac1{2\pi}\int_0^{2\pi}|h_t(\phi)|^2\,\dd\phi.
\]
By \Cref{lem:bulk-bounds}, the right-hand side is at least $c_1n^2$ while $\max_j|h_t(\phi_j)|\le C_1n$.  Therefore
\[
\frac1L\sum_j|h_t(\phi_j)|
\ge
\frac{L^{-1}\sum_j|h_t(\phi_j)|^2}{\max_j|h_t(\phi_j)|}
\ge cn.
\]
\end{proof}

\begin{lemma}\label{lem:GL-mass}
{ Let $\tau_1,\dots,\tau_r\in[0,1]$ and $\rho_1,\dots,\rho_r\ge0$ satisfy
\[
\sum_{i=1}^r \rho_i = 1,\quad
\sum_{i=1}^r \rho_i \tau_i^4(1-\tau_i)^4
=\int_0^1 t^4(1-t)^4\,dt=\frac1{630}.
\]}
Then
\begin{equation}\label{eq:34}
\sum_{i:\tau_i\in[1/4,3/4]} \rho_i \ge \frac{7253}{55125}>\frac18.
\end{equation}
 
\end{lemma}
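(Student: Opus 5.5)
The idea is a one-line Markov/Chebyshev-type moment argument. Put $g(t)=t^4(1-t)^4$ and compare the prescribed discrete moment $\sum_i\rho_i g(\tau_i)=1/630$ with the largest value $g$ can take inside and outside the central interval $J=[1/4,3/4]$. The bound on $\sum_{\tau_i\in J}\rho_i$ should then come out of a single linear inequality in the unknown central mass.

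\textbf{Step 1: bound $g$ on $[0,1]$.} Write $g(t)=(t(1-t))^4$. Since $t(1-t)$ is symmetric about $1/2$ and increasing on $[0,1/2]$, $g$ has the same two properties. Two bounds follow.
\begin{itemize}
\item On all of $[0,1]$, and in particular on $J$, $g\le g(1/2)=4^{-4}=1/256$.
\item On $[0,1]\setminus J$, $g\le g(1/4)=(3/16)^4=81/65536$.
\end{itemize}

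\textbf{Step 2: split the moment sum.} Let $p=\sum_{\tau_i\in J}\rho_i$. Because $\sum_i\rho_i=1$, the nodes outside $J$ carry mass $1-p$. Using the bounds from Step 1 on each part,
\[
\frac1{630}=\sum_i\rho_i\,g(\tau_i)\le \frac{p}{256}+(1-p)\frac{81}{65536}.
\]

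\textbf{Step 3: solve for $p$.} Subtract $81/65536$ from both sides. On the right, $\tfrac1{256}-\tfrac{81}{65536}=\tfrac{175}{65536}$. On the left,
\[
\frac1{630}-\frac{81}{65536}=\frac{65536-51030}{630\cdot 65536}=\frac{14506}{630\cdot65536}.
\]
Therefore
\[
p\ge\frac{14506}{630\cdot175}=\frac{14506}{110250}=\frac{7253}{55125},
\]
which is the constant in \eqref{eq:34}.

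\textbf{Step 4: compare with $1/8$.} Since $8\cdot7253=58024>55125$, we get $7253/55125>1/8$.

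No step is a real obstacle. The only point needing care is the monotonicity of $g$ on each side of $1/2$, which guarantees that $g(1/4)$ really is the maximum of $g$ off $J$. The value $\int_0^1 g=B(5,5)=(4!)^2/9!=1/630$ is given in the hypothesis, so it does not need to be recomputed.
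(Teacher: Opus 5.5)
Your proposal is correct and matches the paper's proof: bound $t^4(1-t)^4$ by $1/256$ on $[0,1]$ and by $(3/16)^4=81/65536$ off $[1/4,3/4]$, then solve the resulting linear inequality for the central mass to get $7253/55125>1/8$. Your arithmetic and your monotonicity justification for the off-interval bound are both correct.
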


\begin{proof}
Put $p(t)=t^4(1-t)^4$. On $[0,1]$, $p\le 1/256$, while outside $[1/4,3/4]$ one has $p\le (3/16)^4=81/65536$. If $S=\sum_{\tau_i\in[1/4,3/4]}\rho_i$, then
\[
\frac1{630}\le \frac{S}{256}+\frac{81}{65536}(1-S).
\]
Hence
\[
S\ge \frac{630^{-1}-81/65536}{256^{-1}-81/65536}=\frac{7253}{55125}>\frac18.
\]
\end{proof}

{  This lemma applies both to the Gauss–Legendre data $\{(t_k,A_k)\}$ for $n\ge 4$, since that rule is exact through degree $2n+1\ge 9$, and to the radial marginal $\{(t_j,\lambda_j)\}$ of any positive cone rule exact on $\Pi_{2n}(V)$ with $n\ge 4$. For the latter application, $t^4(1-t)^4\in\Pi_8(V)$ and $\sum_j\lambda_j=1$; repeated radial coordinates cause no difficulty. 

Combining the preceding lower estimate with the general upper bound gives
linear growth for the product rule.}
\begin{theorem}\label{thm:product-sharp}
Let $\gamma=0$ and let $L_n^{V,\prod}$ be generated by the $(n+1)\times(2n+1)$ Gauss--Legendre/trapezoidal product rule.  Then
\[
cn\le\Lambda_n^{V,\prod}\le Cn,
\qquad n\ge1.
\]
\end{theorem}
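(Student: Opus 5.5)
The upper bound needs no new work: the product rule of \Cref{prop:product} with $s=2n$ is a positive cone cubature formula exact on $\Pi_{2n}(V)$, so \Cref{thm:upper} with $\gamma=0$ gives $\Lambda_n^{V,\pr}\le Cn$ for all $n\ge1$. All the effort goes into the lower bound. For that I evaluate the Lebesgue function at the boundary point $\xi_*=(1,0,1)$, using $\Lambda_n^{V,\pr}\ge\lambda_n^V(\xi_*)$.

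By \eqref{eq:Lebesgue-function} and the product structure,
\[
\lambda_n^V(\xi_*)=\sum_{k=1}^{n+1}A_k\,\frac1L\sum_{j=0}^{L-1}\abs{K_n^V(\xi_*,\xi(t_k,\phi_j))}.
\]
Fix $a=1/4$, $b=3/4$ and keep only the radial nodes with $t_k\in[1/4,3/4]$; all other terms are nonnegative, so dropping them is harmless. For each retained $k$, \Cref{lem:angular-l1} (applied with $a=1/4$, $b=3/4$ and $t=t_k$) bounds the inner angular average below by $cn$, uniformly in $k$, as soon as $n\ge n_0$. This gives
\[
\lambda_n^V(\xi_*)\ge cn\sum_{k:\,t_k\in[1/4,3/4]}A_k .
\]
Next I need a lower bound on the radial mass in the annulus. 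For $n\ge4$ the Gauss--Legendre rule is exact through degree $2n+1\ge9$, so it integrates $t^4(1-t)^4$ exactly and has total weight one. \Cref{lem:GL-mass} therefore shows that the remaining sum is larger than $1/8$. Hence $\Lambda_n^{V,\pr}\ge (c/8)\,n$ for $n\ge\max(n_0,4)$.

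For the finitely many $n$ below this threshold I only need $\Lambda_n^{V,\pr}\ge1$, and that holds because $L_n^V$ is a projection onto $\Pi_n(V)$ that reproduces constants. Shrinking $c$ then covers every $n\ge1$.

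The substantive analysis, namely the angular $L^2$ lower bound via nonstationary phase, is already contained in \Cref{lem:bulk-bounds} and \Cref{lem:angular-l1}. The only point that needs care is that $n_0$ and $c$ in \Cref{lem:angular-l1} are uniform over $t\in[1/4,3/4]$, so that a single constant works simultaneously for all the radial nodes $t_k$ in the interval. \Cref{lem:bulk-bounds} provides exactly this uniformity.
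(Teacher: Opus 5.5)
Your proposal is correct and follows the paper's proof essentially step for step. The upper bound comes from \Cref{thm:upper}. The lower bound evaluates the Lebesgue function at $\xi_*$, keeps only radial nodes in $[1/4,3/4]$, applies \Cref{lem:angular-l1} to the angular averages and \Cref{lem:GL-mass} to the radial mass for $n\ge4$, and uses the trivial bound $\Lambda_n^{V,\pr}\ge1$ for the finitely many small $n$; your remark on the uniformity of the constants in $t\in[1/4,3/4]$ makes explicit a point the paper uses implicitly.
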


\begin{proof}
The upper bound is \Cref{thm:upper}.  At $\xi_*$,
\[
\lambda_n^V(\xi_*)
=\sum_{k=1}^{n+1}A_k\left[\frac1L\sum_{j=0}^{L-1}
\abs{K_n^V(\xi_*,\xi(t_k,\phi_j))}\right].
\]
Restrict to $t_k\in[1/4,3/4]$.  \Cref{lem:angular-l1} bounds each bracket below by $cn${ , and \Cref{lem:GL-mass}, applied to the product rule, } gives a fixed positive lower bound for the total radial weight.  Thus $\Lambda_n^{V,\prod}\ge c'n$ for all sufficiently large $n$.  The finitely many remaining cases follow after reducing $c'$, since every reproducing hyperinterpolation projector has norm at least one.
\end{proof}

\begin{remark}
The proof explains the $n^{1/2}$ loss in the Christoffel-diagonal estimate.  The diagonal controls an $\ell^2$ scale, with $K_n^V(\xi_*,\xi_*)^{1/2}\asymp n^{3/2}$, while the sampled $\ell^1$ kernel has only linear order.  The improvement comes from off-diagonal oscillation, quantified by exact angular sampling of the squared kernel.
\end{remark}

\subsection{A rule-independent lower bound}\label{subsection6.3}

The product proof uses tensor structure.  Averaging over the rim removes it completely.

\begin{theorem}[Rule-independent lower bound]\label{thm:lower}
There exists an absolute constant $c>0$ such that for every $n\ge1$ and every positive cubature rule on $(V,\nu_0)$ exact on $\Pi_{2n}(V)$,
\begin{equation}\label{eq:lower}
\Lambda_n^V\ge cn.
\end{equation}
Consequently, together with \Cref{thm:upper},
\begin{equation}\label{eq:sharp-law}
cn\le\Lambda_n^V\le Cn
\end{equation}
uniformly over all such rules and all $n\ge1$. { Thus the order is optimal within the
class of hyperinterpolation operators generated by positive degree-2n rules.}
\end{theorem}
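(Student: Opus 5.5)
The plan is to replace the exact angular sampling used in \Cref{lem:angular-l1} by an average over the rim $\{\xi(1,\psi)\}$. The lower bound will come from the average of the Lebesgue function, not from its value at a single point. Since $\Lambda_n^V\ge\lambda_n^V(\xi)$ for every $\xi\in V$, we have
\[
\Lambda_n^V\ge\frac1{2\pi}\int_0^{2\pi}\lambda_n^V(\xi(1,\psi))\,\dd\psi
=\sum_j\lambda_j\,\frac1{2\pi}\int_0^{2\pi}\abs{K_n^V(\xi(1,\psi),\xi_j)}\,\dd\psi .
\]
The rotation $(x,y,t)\mapsto(R_\psi(x,y),t)$ preserves $\nu_0$ and $\Pi_n(V)$. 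Hence $K_n^V$ is rotation invariant, and $K_n^V(\xi(1,\psi),\xi(t,\phi))=K_n^V(\xi_*,\xi(t,\phi-\psi))$. So for a node at height $t_j$, the inner average equals
\[
g_n(t_j):=\frac1{2\pi}\int_0^{2\pi}\abs{K_n^V(\xi_*,\xi(t_j,\phi))}\,\dd\phi ,
\]
which does not depend on the node's angle. This is the step that removes every assumption on the angular arrangement of the nodes.

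Next I bound $g_n(t)$ from below uniformly on $[1/4,3/4]$, using \Cref{lem:bulk-bounds} with $a=1/4$, $b=3/4$. With $h_t(\phi)=K_n^V(\xi_*,\xi(t,\phi))$, the elementary inequality $\norm{h}_{L^1}\ge\norm h_{L^2}^2/\norm h_{L^\infty}$ on the normalized circle gives
\[
g_n(t)\ge\frac{c_{a,b}n^2}{C_{a,b}n}=c'n,\qquad t\in[1/4,3/4],\ n\ge n_0 .
\]
No discrete angular exactness is needed here, because the average is now a continuous integral.

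Then I control the radial mass of the rule. For $n\ge4$ the polynomial $t^4(1-t)^4$ lies in $\Pi_8(V)\subseteq\Pi_{2n}(V)$, and exactness gives $\sum_j\lambda_j=1$. By the remark following \Cref{lem:GL-mass}, that lemma applies to the radial marginal $\{(t_j,\lambda_j)\}$ and yields
\[
\sum_{t_j\in[1/4,3/4]}\lambda_j>\tfrac18 .
\]
Dropping the remaining nonnegative terms, we get $\Lambda_n^V\ge\frac18c'n$ for $n\ge\max(n_0,4)$.

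For the finitely many smaller $n$, I use that $L_n^V$ is a projection onto $\Pi_n(V)$, so $\Lambda_n^V\ge1$; shrinking $c$ covers these cases. Combining this with \Cref{thm:upper} at $\gamma=0$ gives \eqref{eq:sharp-law}.

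The main obstacle is the first step: checking that the rim average really reduces to the $\xi_*$-kernel for an arbitrary rule. This rests on rotation invariance of $K_n^V$, which I would justify from the invariance of $\nu_0$ and of the trace space, or directly from \eqref{eq:Fn-sum}, which depends only on $\phi$ relative to the rim point. Nodes at the apex or on the rim need no special care, since those terms are simply discarded. The quantitative work was already done in \Cref{lem:bulk-bounds}; the crucial point is only that its constants are uniform in $t\in[1/4,3/4]$.
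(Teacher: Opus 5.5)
Your proposal is correct and follows the paper's proof essentially step for step. Both arguments use the rim average with rotation invariance to reduce to the angular $L^1$ average at the height $t_j$, the bound $L^1\ge (L^2)^2/L^\infty$ from \Cref{lem:bulk-bounds} on $[1/4,3/4]$, the radial-mass bound from \Cref{lem:GL-mass}, and $\Lambda_n^V\ge1$ for the finitely many small $n$. The one small difference is in that last step: the paper gets $\lambda_n^V(\xi)\ge1$ directly from $\sum_j\lambda_jK_n^V(\xi,\xi_j)=\int_V K_n^V(\xi,\zeta)\,\dd\nu_0(\zeta)=1$, whereas you appeal to the projection property, which is equivalent.
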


\begin{proof}
Write the rule as $\{(\xi_j,\lambda_j)\}_{j=1}^N$ with $\xi_j=\xi(t_j,\phi_j)$; when $t_j=0$, choose $\phi_j$ arbitrarily.  Put $\eta_\psi=\xi(1,\psi)$.  Let $R_\psi$ denote rotation through angle $\psi$ about the $t$-axis.  Since $R_\psi$ preserves $V$, $\nu_0$, and $\Pi_n(V)$, basis independence of the reproducing kernel gives
\[
K_n^V(R_\psi\xi,R_\psi\zeta)=K_n^V(\xi,\zeta).
\]
Therefore
\[
K_n^V(\eta_\psi,\xi(t_j,\phi_j))
=K_n^V(\xi_*,\xi(t_j,\phi_j-\psi)).
\]
Define
\[
G_n(t)=\frac1{2\pi}\int_0^{2\pi}
\abs{K_n^V(\xi_*,\xi(t,\phi))}\,\dd\phi.
\]
For each node,
\[
\frac1{2\pi}\int_0^{2\pi}
\abs{K_n^V(\eta_\psi,\xi_j)}\,\dd\psi=G_n(t_j).
\]
Since the weights are positive,
\begin{equation}\label{eq:rim-average}
\Lambda_n^V
\ge\frac1{2\pi}\int_0^{2\pi}\lambda_n^V(\eta_\psi)\,\dd\psi
=\sum_j\lambda_jG_n(t_j).
\end{equation}
{ Fix the interval $[1/4,3/4]$ once and for all. The constants and estimates in  \Cref{lem:bulk-bounds} are therefore absolute and independent of both $n$ and the cubature rule.} Apply that lemma on this interval.  For all large $n$ and $t$ in this interval,
\begin{equation}\label{eq:G-lower}
G_n(t)
\ge
\frac{(2\pi)^{-1}\int_0^{2\pi}|K_n^V(\xi_*,\xi(t,\phi))|^2\,\dd\phi}
{\max_\phi|K_n^V(\xi_*,\xi(t,\phi))|}
\ge\frac{c_1}{C_1}n.
\end{equation}
It remains to force positive cubature mass into the radial bulk. For $n\ge 4$, degree-$2n$ exactness includes $\Pi_8(V)$, so \Cref{lem:GL-mass}  gives
\[
\sum_{\substack{j:t_j\in[1/4,3/4]}} \lambda_j > \frac18.
\]
Combining this with \eqref{eq:rim-average} and \eqref{eq:G-lower},
\[
\Lambda_n^V \ge \sum_{\substack{j:t_j\in[1/4,3/4]}} \lambda_j G_n(t_j)
\ge \frac{c_1}{C_1} n \sum_{\substack{j:t_j\in[1/4,3/4]}} \lambda_j
> \frac{c_1}{8C_1} n
\]
for all sufficiently large $n$.

For every $n$ and every $\xi\in V$, exactness and the reproducing property give
\[
\lambda_n^V(\xi) \ge \biggl| \sum_j \lambda_j K_n^V(\xi,\xi_j) \biggr|
= \biggl| \int_V K_n^V(\xi,\zeta)\,d\nu_0(\zeta) \biggr| = 1.
\]
Reducing the constant handles the finitely many remaining degrees.
\end{proof}

 { Transferring the cone estimate back to the disk gives the following lower bounds
for even-data and full disk hyperinterpolation.}
\begin{corollary}\label{cor:disk-lower}
 { Let $H_{2n}^B$ be generated by any centrally symmetric positive disk rule exact through
degree $4n+1$ for normalized Lebesgue measure on $\B^2$. Then
\[\bigl\|H_{2n}^B|_{C_\even(\B^2)}\bigr\|_{C_\even(\B^2)\to C(\B^2)}\geq cn,
 \qquad \|H_{2n}^B\|_{C(\B^2)\to C(\B^2)}\geq cn,
\]
with an absolute constant independent of the rule.}
\end{corollary}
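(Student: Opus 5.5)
The plan is to pull the disk problem back to the cone, where \Cref{thm:lower} already supplies a rule-independent lower bound. Let the disk rule be centrally symmetric, positive, and exact through degree $4n+1$ for normalized Lebesgue measure, i.e.\ $\mu_0$. By \Cref{thm:cubature-equivalence}(ii), pushing it through $Q$ and merging the pairs $\pm z$ gives a positive cone rule on $(V,\nu_0)$ that is exact on $\Pi_{2n}(V)$. Part (iii) of the same theorem says that lifting this cone rule recovers exactly the original disk rule, nodes and weights included. So the disk operator $H_{2n}^B$ is precisely the operator that \Cref{thm:operator-transfer} attaches to the pushed-forward cone rule.

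The even-data bound then comes from \eqref{eq:norm-transfer}:
\[
\bigl\|H_{2n}^B|_{C_\even(\B^2)}\bigr\|_{C_\even(\B^2)\to C(\B^2)}=\norm{L_n^V}_{C(V)\to C(V)}=\Lambda_n^V .
\]
The last equality holds because the cone nodes are distinct after merging, and the paper has already noted that the Lebesgue constant then equals the operator norm. \Cref{thm:lower} gives $\Lambda_n^V\ge cn$ with an absolute constant $c$ that depends neither on $n$ nor on the cone rule, and hence not on the disk rule either.

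The full-disk bound follows by monotonicity. Since $C_\even(\B^2)\subset C(\B^2)$ isometrically in the uniform norm, the norm of the restriction is at most the norm of $H_{2n}^B$ on all of $C(\B^2)$. This is the inequality already written in \eqref{eq:norm-transfer}.

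The only step that needs care is the bookkeeping. We must check that the cone rule produced by the pushforward meets the exact hypotheses of \Cref{thm:lower}: the measure is $\nu_0$ (since $\gamma=0$), the weights are strictly positive, and the nodes are distinct after merging. We must also check that the lifting convention in \Cref{thm:operator-transfer} reproduces the given disk rule, including the origin with its full weight when it is present; this is where \Cref{thm:cubature-equivalence}(iii) is used. With these checks, the constant from \Cref{thm:lower} carries over unchanged and the corollary follows.
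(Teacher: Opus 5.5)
Your proposal is correct and follows the paper's own proof. You push the centrally symmetric disk rule forward via \Cref{thm:cubature-equivalence}, identify $H_{2n}^B$ with the operator attached to the resulting cone rule, combine the norm identity \eqref{eq:norm-transfer} with \Cref{thm:lower}, and get the full-disk bound from the inequality in \eqref{eq:norm-transfer}. Your extra bookkeeping only makes explicit what the paper's one-line proof leaves implicit: part (iii) shows the lift reproduces the given rule, and distinct cone nodes give $\Lambda_n^V=\norm{L_n^V}$.
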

\begin{proof}
{ Push the rule forward by Theorem~\ref{thm:cubature-equivalence} and use the norm identity
in~\eqref{eq:norm-transfer} together with Theorem~\ref{thm:lower}.}
\end{proof}

{ The full-operator inequality agrees with the claim in
\cite[Corollary~3.6]{Wade2013}; the argument above supplies an independent operator-norm
proof. The lower bound for the even-data restriction is the additional conclusion needed for
the cone problem.}

\subsection{Weighted case and open sharp law}

Write the disk weight in standard ball form
\[
(1-|z|^2)^{\mu-1/2},
\qquad
\mu=\gamma+\frac12.
\]
For the continuous orthogonal projection on the unit ball, Xu~\cite{XuKernel2001} proved in dimension two the orders
\[
n^{1/2},\quad -\frac12<\mu<0,
\qquad
n^{\mu+1/2},\quad \mu\ge0.
\]
This suggests, for the explicit Gauss--Jacobi/trapezoidal product rule,
\begin{equation}\label{eq:weighted-conjecture}
\Lambda_{n,\gamma}^{V,\prod}\asymp
\begin{cases}
n^{1/2},&-1<\gamma<-1/2,\\
n^{\gamma+1},&\gamma\ge-1/2.
\end{cases}
\end{equation}
Equation \eqref{eq:weighted-conjecture} is a conjectural hyperinterpolation statement; the projection theorem alone does not imply it.  \Cref{thm:upper,thm:lower} prove the case $\gamma=0$ for every positive rule. { \Cref{thm:upper} also proves the upper half of \eqref{eq:weighted-conjecture} for every nonnegative half-integer
$\gamma$. } The remaining weights require a weighted boundary-kernel analysis or an equivalent estimate for the even restriction of disk hyperinterpolation.

\subsection{Numerical consistency check}

For the product rule, define the boundary-midpoint value
\[
\Lambda_n^{\mathrm{mid}}
:=\lambda_n^V\!\left(\xi\!\left(1,\frac{\pi}{2n+1}\right)\right).
\]
By rotational invariance and \Cref{lem:boundary-kernel},
\[
K_n^V(\xi(1,\phi_0),\xi(t_k,\phi_j))
=F_n\!\left(\sqrt{t_k}\cos\frac{\phi_j-\phi_0}{2}\right),
\qquad
\phi_0=\frac\pi{2n+1}.
\]
Thus the values in \Cref{tab:Lebesgue-mid} are computed directly from Gauss--Legendre nodes and weights and the closed formula \eqref{eq:Fn-closed}.  They reproduce the previously checked values through $n=30$ and extend the consistency check to $n=100$.  The quotient $\Lambda_n^{\mathrm{mid}}/n$ varies slowly and remains fully consistent with the linear law; no limiting constant is asserted.

\begin{table}[ht]
\centering
\caption{Boundary-midpoint Lebesgue values for the product rule, $\gamma=0$.}
\label{tab:Lebesgue-mid}
\small
\setlength{\tabcolsep}{4.2pt}
\begin{tabular}{c|rrrrrrrrrrrr}
\toprule
$n$ & 2&4&8&10&12&16&20&30&40&60&80&100\\
\midrule
$\Lambda_n^{\mathrm{mid}}$
&4.04&6.57&11.92&14.38&16.98&22.06&27.23&39.97&52.73&78.20&103.77&129.22\\
$\Lambda_n^{\mathrm{mid}}/n$
&2.02&1.64&1.49&1.44&1.42&1.38&1.36&1.33&1.32&1.30&1.30&1.29\\
\bottomrule
\end{tabular}
\end{table}

\section{Dimension-matched Prony formulation for near-minimal cubature}\label{sec:prony}

The quotient also clarifies the algebraic size of the all-degree compression problem.  We record the parameter count for the origin-containing orbit family of Benouahmane, Cuyt and Yaman~\cite{BenouahmaneCuytYaman2019}; no existence assertion is made here.

Their radial disk construction has cubature degree $m=2k-1$.  When $k$ is odd, the reduced $(C,B,A,O)$ system contains $(k+1)^2/4$ equations and has a Prony-type structure.  At the lifted degree $m=4n+1$, $k=2n+1$, and therefore
\begin{equation}\label{eq:Prony-equations}
\frac{(k+1)^2}{4}=(n+1)^2=D_n.
\end{equation}
Thus the reduced disk system has exactly as many equations as the dimension of $\Pi_n(V)$.

Let $I,J,K$ denote the numbers of the orbit types used in~\cite[Section~4]{BenouahmaneCuytYaman2019}.  Including the origin, the number of scalar unknowns and the disk node count are
\begin{equation}\label{eq:Prony-counts}
U=3I+2J+K+1,
\qquad
M=8I+4J+2K+1.
\end{equation}
For the moment families covered by their Prony analysis, including Gegenbauer weights $(1-r^2)^{\lambda-1/2}$ with $\lambda\ge0$, one has the necessary restriction $2I>k-5$, hence
\begin{equation}\label{eq:I-lower}
I\ge n-1
\end{equation}
for the corresponding range $\gamma\ge-1/2$.

{ The following proposition converts the orbit-parameter assumptions into
dimension-matched disk and cone node counts.}
\begin{proposition}\label{prop:Prony}
Consider candidate rules in this origin-containing orbit family at degree $4n+1$.  If
\[
I=n-1+O(1),
\qquad
U=D_n+O(1),
\]
then
\begin{equation}\label{eq:M-Prony}
M=2D_n+2n+O(1)
\end{equation}
on the disk, and the quotient rule has
\begin{equation}\label{eq:N-Prony}
N_{\cone}=D_n+n+O(1).
\end{equation}
\end{proposition}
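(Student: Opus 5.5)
The plan is pure bookkeeping with the two linear relations in \eqref{eq:Prony-counts}: eliminate $J$ and $K$ from $M$ by means of $U$, and then pass to the cone through \Cref{cor:node-count}.

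\textbf{Step 1: the key identity.} Compare $M$ with $2U$:
\[
M-2U=(8I+4J+2K+1)-(6I+4J+2K+2)=2I-1 .
\]
Hence $M=2U+2I-1$, exactly. This holds for every orbit configuration and uses no information about $J$ and $K$ beyond \eqref{eq:Prony-counts}.

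\textbf{Step 2: the disk count.} Substitute the hypotheses $U=D_n+O(1)$ and $I=n-1+O(1)$ into the identity:
\[
M=2D_n+2(n-1)-1+O(1)=2D_n+2n+O(1).
\]
This is \eqref{eq:M-Prony}.

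\textbf{Step 3: the cone count.} The family contains the origin, its node count $M$ is odd, and it is centrally symmetric at degree $4n+1$. So \Cref{cor:node-count} (via the pushforward of \Cref{thm:cubature-equivalence}) gives $N_{\cone}=(M+1)/2=U+I$. Therefore
\[
N_{\cone}=D_n+n-1+O(1)=D_n+n+O(1),
\]
which is \eqref{eq:N-Prony}.

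\textbf{Potential obstacle.} The only point needing care is whether the orbit counting really gives $M+1$ even and $N_{\cone}=(M+1)/2$. This requires each orbit of type $A,B,C$ to consist of antipodal pairs, not merely to be centrally symmetric as a set, and requires no two orbits to collapse onto the same cone node. If coincident images were merged, \Cref{thm:cubature-equivalence}(ii) would only give $N_{\cone}\le (M+1)/2$.

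I would therefore state the proposition for distinct orbit nodes, as the parameter count implicitly assumes. I would also note that the lower bound $I\ge n-1$ in \eqref{eq:I-lower} shows that the $+n$ term cannot be removed within this family when $U\approx D_n$.
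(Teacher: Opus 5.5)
Your proof is correct and is essentially the paper's argument: the paper eliminates $L=2J+K$ through $U$ to obtain $M=2D_n+2I-1+O(1)$, which is your exact identity $M=2U+2I-1$, and then applies \Cref{cor:node-count} to get $N_{\cone}=(M+1)/2$. The collapse concern you raise is already settled by the fiber structure of $Q$, since every nonzero fiber is exactly $\{z,-z\}$ (see the proof of \Cref{cor:even-isometry}); hence distinct, centrally symmetric disk nodes containing the origin push forward to exactly $(M+1)/2$ distinct cone nodes.
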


\begin{proof}
Set $L=2J+K$.  From \eqref{eq:Prony-counts},
\[
L=D_n-3I-1+O(1).
\]
Hence
\[
M=8I+2L+1=2D_n+2I-1+O(1)=2D_n+2n+O(1).
\]
Since the ansatz contains the origin, \Cref{cor:node-count} gives
\[
N_{\cone}=\frac{M+1}{2}=D_n+n+O(1).
\]
\end{proof}

\Cref{prop:Prony} is conditional: it identifies a dimension-matched scale but does not prove solvability of the nonlinear moment system with nodes in the disk and positive weights.  The distinction is essential, because M\"oller's lower bound is not attained in every degree.  Already for $n=2$, the lifted disk degree is $9$ and the M\"oller bound is $17$ nodes.  Cools and Verlinden~\cite{CoolsVerlinden1992} show that a $17$-point degree-nine rule attaining this bound does not exist.  Thus an origin-containing $9$-node cone rule is impossible, although this argument does not exclude an apex-free $9$-node cone rule, whose lift would have $18$ nodes.  The known $19$-point origin-containing disk rule yields the rigorous $10$-node cone upper bound.

For computation, the disk formulation is advantageous because central symmetry removes all odd moments and the conic constraint $x^2+y^2=t^2$ disappears.  This symmetry-reduced viewpoint is complementary to general positive exact cubature algorithms and Carath\'eodory--Tchakaloff compression methods~\cite{Glaubitz2023,PiazzonSommarivaVianello2017}.  For fixed $n$, one may solve the reduced moment equations under $|z_i|\le1$ and positive weights, using symbolic elimination where feasible and constrained nonlinear least squares or continuation otherwise.  Any numerical candidate should then be certified for node location, positivity, and degree-$(4n+1)$ exactness before transfer through $Q$.  The schemes in \Cref{tab:numerical-compression} show finite-degree compression but do not imply the $O(n)$ excess in \eqref{eq:N-Prony}.

\section{Conclusions}

The classical quadratic disk--cone transformation becomes substantially more useful when formulated as a measure-preserving quotient of full polynomial trace spaces.  At the discrete level it gives an exact equivalence between positive degree-$2n$ conic cubature and centrally symmetric positive degree-$(4n+1)$ disk cubature.  Under this equivalence, M\"oller's lower bound becomes the cone dimension bound and the excess above the two bounds is transferred exactly up to a parity offset.  This converts near-minimal disk formulas into compressed non-product cone rules.

For computational approximation, the central conclusion is that compression does not create an $L^2$ stability penalty.  Every positive degree-$2n$ rule gives an exact sampling isometry on $\Pi_n(V)$, so the weighted sampling matrix has $\kappa_2=1$ regardless of node count or geometry.  At the $L^\infty$ level, the quotient identifies conic hyperinterpolation with the even restriction of disk hyperinterpolation.  For $\gamma=0$, a rim-averaging argument then yields the universal sharp law
\[
\Lambda_n^V\asymp n
\]
for every positive degree-$2n$ cone rule.  Thus the computational message is not merely that nodes can be removed, but that they can be removed while preserving exact $L^2$ conditioning and retaining the optimal order of uniform stability.

Two questions remain open.  The first is the weighted sharp law suggested by \eqref{eq:weighted-conjecture}.  The second is the existence of all-degree positive rules at the dimension-matched scale $D_n+n+O(1)$ suggested by \Cref{prop:Prony}.  Either direction would extend the compression--stability principle beyond the present unweighted sharp theorem.

\end{document}